\newcommand{\fz}{\frac}
\newcommand{\prz}[2]{ \frac{\partial{#1}}{\partial{#2}} }
\newcommand{\pz}{\partial}
\newcommand{\lA}{\langle}
\newcommand{\rA}{\rangle}
\newcommand{\rarrow}{\rightarrow}
\newcommand{\ol}{\overline}
\renewcommand{\Omega}{\varOmega}
\renewcommand{\Gamma}{\varGamma}
\renewcommand{\epsilon}{\varepsilon}
\newcommand{\barO}{\bar{\Omega}}
\newtheorem{Def}{Definition}
\newtheorem{Thm}{Theorem}
\newtheorem{Prop}{Proposition}
\newtheorem{Lem}{Lemma}
\newtheorem{Hyp}{Hypothesis}
\newtheorem{Rmk}{Remark}
\newtheorem{Ex}{Example}
\newtheorem{proof}{\it Proof}%
\begin{document}
\title{Error estimates of a stabilized Lagrange-Galerkin scheme for the Navier-Stokes equations}
%
%
%
\author{Hirofumi~Notsu \thanks{
Waseda Institute for Advanced Study, Waseda University, 3-4-1, Ohkubo, Shinjuku, Tokyo 169-8555, Japan.\ \ \texttt{h.notsu@aoni.waseda.jp}
}
~and Masahisa~Tabata \thanks{
Department of Mathematics, Waseda University, 3-4-1, Ohkubo, Shinjuku, Tokyo 169-8555, Japan.\ \ \texttt{tabata@waseda.jp}
}
}
%
%
%
%
%
%
\maketitle
\begin{abstract}
\noindent
Error estimates with optimal convergence orders are proved for a stabilized Lagrange-Galerkin scheme for the Navier-Stokes equations.
The scheme is a combination of Lagrange-Galerkin method and Brezzi-Pitk\"aranta's stabilization method.
It maintains the advantages of both methods;
(i)~It is robust for convection-dominated problems and the system of linear equations to be solved is symmetric.
(ii)~Since the P1 finite element is employed for both velocity and pressure, the number of degrees of freedom is much smaller than that of other typical elements for the equations, e.g., P2/P1.
Therefore, the scheme is efficient especially for three-dimensional problems.
The theoretical convergence orders are recognized numerically by two- and three-dimensional computations.
\end{abstract}
%
\section{Introduction}
The purpose of this paper is to prove the stability and convergence of a stabilized Lagrange-Galerkin scheme for the Navier-Stokes equations.
The scheme is a combination of a Lagrange-Galerkin (LG) method and Brezzi-Pitk\"aranta's stabilization method~\cite{BP-1984}.  
It has been proposed by us in~\cite{NT-2008-JSIAM,N-2008-JSCES} and, to the best of our knowledge, it is one of the earliest works which combine the two methods, Lagrange-Galerkin and stabilization.
Optimal error estimates are shown for both velocity and pressure.  
\par
The LG method is a finite element method embracing the method of characteristics.    
The LG method has common advantages, robustness for convection-dominated problems and symmetry of the resulting matrix, which are desirable in scientific computation of fluid dynamics.  
Many authors have studied LG schemes for convection-diffusion problems~\cite{BNV-2006,DR-1982,ER-1981,PT-2010,RT-2002} and for the Navier-Stokes, Oseen and natural convection problems~\cite{AG-2000,BB-2011,BMMR-1997,JLL-2010,NT-2009-JSC,NT-Oseen,P-1982,Suli-1988}, see also the bibliography therein.  
The convergence analysis of LG schemes for the Navier-Stokes equations has been done by Pironneau \cite{P-1982} and improved by S\"uli \cite{Suli-1988}.  
The analysis has been extended to a higher-order time scheme by Boukir et al. ~\cite{BMMR-1997} and to the projection method by Achdou and Guermond \cite{AG-2000}.  
While in these analyses they use a stable element satisfying the conventional inf-sup condition~\cite{GR-1986}, we extend the convergence analysis to a stabilized LG scheme.
The reason to use the stabilized method is to reduce the number of degrees of freedom (DOF).  
In fact the cheapest P1 element is employed in our scheme for both velocity and pressure, which is  based on Brezzi-Pitk\"aranta's pressure-stabilization method.   
Hence, the number of DOF is much smaller than that of typical stable elements, e.g., P2/P1. 
As a result, the scheme leads to a small-size symmetric resulting matrix, which can be solved by a powerful linear solvers for symmetric matrices, e.g., minimal residual method (MINRES) \cite{Betal-1994,S-2003}. 
It is, therefore, efficient especially in three-dimensional computation.
\par
In LG schemes the position at the previous time $t^{n-1}$ of a particle is sought along the trajectory, which is governed by a system of ordinary differential equations.  
The position at $t^{n-1}$ of a particle at a point at $t^n$ is called upwind point of the point or foot of the trajectory arriving at the point.
While the system of ordinary differential equations is assumed to be solved exactly in \cite{AG-2000,Suli-1988},  approximate upwind points are computed explicitly without assuming the exact solvability of the ordinary differential equations in~\cite{BMMR-1997,P-1982}.
Therefore, we may say that the latter schemes are fully discrete.
Our scheme is also fully discrete since the approximate upwind points are simply obtained by the Euler method.  
In fully discrete schemes, however, it is not obvious that the approximate upwind points remain in the domain, which should be proved.
Such difficulty caused by the nonlinearity of the Navier-Stokes equations is overcome in the proof by mathematical induction, which has been developed in~\cite{BMMR-1997,Suli-1988}.
Thus, the stability and convergence with optimal error estimates are proved for the velocity in $H^1$-norm and for the pressure in $L^2$-norm~(Theorem~\ref{thm:main_results}) and for the velocity in $L^2$-norm~(Theorem \ref{thm:main_results_L2}) under the condition $\Delta t = O(h^{d/4})$, where $d$ is the dimension of the space.    
This condition is caused from the nonlinearity of the problem and it is not required for the Oseen problems ~\cite{NT-Oseen}.
A stabilized LG scheme with an $L^2$-type pressure-stabilization for the Navier-Stokes equations has been proposed in~\cite{JLL-2010}, where the exact solvability of the ordinary differential equations is assumed for upwind points.  
The optimal error estimates are proved under a strong stability condition $\Delta t = O(h^2)$ for $d=2$.  
\par
This paper is organized as follows.
Our stabilized LG scheme for the Navier-Stokes equations is presented in Section~\ref{sec:scheme}.
Main results on the stability and convergence with optimal error estimates are shown in Section~\ref{sec:main_results}, and they are proved in Section~\ref{sec:proofs}.
The theoretical convergence orders are recognized numerically by two- and three-dimensional computations in Section~\ref{sec:numerics}.
Conclusions are given in Section~\ref{sec:conclusions}. 
In Appendix two lemmas used in Section 4 are proved.
%
%
%
%
%
%
%
%
%
%
%
%
%
%
%
\section{A stabilized Lagrange-Galerkin scheme}\label{sec:scheme}
%
We prepare function spaces and notation to be used throughout the paper.
Let $\Omega$ be a bounded domain in $\mathbb{R}^d (d=2,3)$, $\Gamma\equiv\pz\Omega$ be the boundary of $\Omega$, and $T$ be a positive constant.
For an integer $m \ge 0$ and a real number $p\in [1,\infty]$ we use the {\rm Sobolev} spaces $W^{m,p}(\Omega)$, $W^{1,\infty}_0(\Omega)$, $H^m(\Omega) (=W^{m,2}(\Omega))$, $H^1_0(\Omega)$ and $H^{-1}(\Omega)$.
For any normed space $X$ with norm $\|\cdot\|_X$, we define function spaces $C([0,T]; X)$ and $H^m(0,T; X)$ consisting of $X$-valued functions in $C([0,T])$ and $H^m(0,T)$, respectively.
We use the same notation $(\cdot, \cdot)$ to represent the $L^2(\Omega)$ inner product for scalar-, vector- and matrix-valued functions.
The dual pairing between $X$ and the dual space $X^\prime$ is denoted by $\lA\cdot, \cdot\rA$.
The norms on $W^{m,p}(\Omega)^d$ and $H^m(\Omega)^d$ are simply denoted as
\begin{align*}
\|\cdot\|_{m,p} \equiv \|\cdot\|_{W^{m,p}(\Omega)^d},\quad \|\cdot\|_m \equiv \|\cdot\|_{H^m(\Omega)^d}~(= \|\cdot\|_{m,2})
\end{align*}
and the notation $\|\cdot\|_m$ is employed not only for vector-valued functions but also for scalar-valued ones.
We also denote the norm on $H^{-1}(\Omega)^d$ by $\|\cdot\|_{-1}$.
$L^2_0(\Omega)$ is a subspace of $L^2(\Omega)$ defined by
\begin{align*}
L^2_0(\Omega) \equiv \bigl\{q\in L^2(\Omega);\ (q, 1) = 0 \bigr\}.
\end{align*}
We often omit $[0,T]$, $\Omega$ and/or $d$ if there is no confusion, e.g., $C(L^\infty)$ in place of $C([0,T]; L^\infty(\Omega)^d)$.
For $t_0$ and $t_1\in\mathbb{R}$ we introduce function spaces
\begin{align*}
Z^m(t_0, t_1) \equiv \{ v\in H^j(t_0, t_1; H^{m-j}(\Omega)^d);\ j=0,\cdots,m,\ \|v\|_{Z^m(t_0, t_1)} < \infty \}
\end{align*}
and $Z^m \equiv Z^m(0, T)$, where the norm $\|v\|_{Z^m(t_0, t_1)}$ is defined by
\begin{align*}
\|v\|_{Z^m(t_0, t_1)} \equiv \biggl\{ \sum_{j=0}^m \|v\|_{H^j(t_0,t_1; H^{m-j}(\Omega)^d)}^2 \biggr\}^{1/2}.
\end{align*}
\par
We consider the Navier-Stokes problem; find $(u, p):$ $\Omega\times (0, T) \rarrow \mathbb{R}^d\times\mathbb{R}$ such that
\begin{subequations}\label{prob:NS}
\begin{align}
\fz{Du}{Dt} - \nabla \bigl\{ 2\nu D(u) \bigr\} + \nabla p & = f \qquad \ \mbox{in}\ \ \Omega\times (0,T), \\
\nabla \cdot u & = 0  \qquad \ \, \mbox{in}\ \ \Omega\times (0,T), \label{eq:div-free}\\
u & = 0 \qquad \ \mbox{on}\ \ \Gamma\times (0,T),\\
u & = u^0 \qquad \mbox{in}\ \ \Omega,\ \mbox{at}\ t=0,
\end{align}
\end{subequations}
where $u$ is the velocity, $p$ is the pressure, $f:\ \Omega\times(0, T)\rarrow \mathbb{R}^d$ is a given external force, $u^0:\ \Omega\rarrow \mathbb{R}^d$ is a given initial velocity, $\nu >0$ is a viscosity, $D(u)$ is the strain-rate tensor defined by
\begin{align*}
D_{ij}(u)\equiv \dfrac{1}{2}\left(\dfrac{\partial u_i}{\partial x_j}+\dfrac{\partial u_j}{\partial x_i}\right), \quad i, j = 1,\cdots, d,
\end{align*}
and $D/Dt$ is the material derivative defined by
\begin{align*}
\fz{D}{Dt}\equiv \prz{}{t} + u\cdot\nabla.
\end{align*}
\par
Letting $V \equiv H^1_0(\Omega)^d$ and $Q\equiv L^2_0(\Omega)$, we define bilinear forms $a$ on $V\times V$, $b$ on $V \times Q$ and $\mathcal{A}$ on $(V \times Q) \times (V \times Q)$ by
\begin{align*}
a(u,v) \equiv 2 \nu \bigl( D(u), D(v) \bigr),\quad 
b(v,q) \equiv -( \nabla\cdot v, q ),\quad
\mathcal{A} \bigl( (u,p), (v,q) \bigr) \equiv a(u,v)+b(v,p)+b(u,q),
\end{align*}
respectively.
Then, we can write the weak formulation of~\eqref{prob:NS}; find $(u, p):\ (0, T)\to V\times Q$ such that for $t\in (0,T)$
\begin{align}
\Bigl( \fz{Du}{Dt}(t), v \Bigr) + \mathcal{A}\bigl( (u,p)(t), (v,q) \bigr) & = (f(t), v),\quad \forall (v, q)\in V\times Q, \label{eq:NS_weak}
\end{align}
with $u(0) = u^0$.
\par
Let $\Delta t$ be a time increment and $t^n \equiv n\Delta t$ for $n\in\mathbb{N}\cup\{0\}$.
For a function $g$ defined in $\Omega\times (0,T)$ we denote generally $g(\cdot,t^n)$ by $g^n$.
Let $X: (0, T)\rightarrow\mathbb{R}^d$ be a solution of the system of ordinary differential equations,
\begin{align}
\frac{dX}{dt} = u(X, t).\label{eq:ode}
\end{align}
Then, it holds that
\begin{align*}
\frac{Du}{Dt} (X(t), t) = \frac{d}{dt} u \bigl( X(t), t \bigr),
\end{align*}
when $u$ is smooth.
Let $X(\cdot; x, t^n)$ be the solution of~\eqref{eq:ode} subject to an initial condition $X(t^n) = x$.
For a velocity $w: \Omega\to\mathbb{R}^d$ let $X_1(w,\Delta t): \Omega\to\mathbb{R}^d$ be a mapping defined by
\begin{align}
X_1(w,\Delta t)(x) \equiv x - w(x)\Delta t.
\label{def:X1}
\end{align}
Since the position $X_1(u^{n-1},\Delta t)(x)$ is an approximation of $X(t^{n-1};x,t^n)$ for $n\ge 1$, we can consider a first order approximation of the material derivative at $(x, t^n)$,
\begin{align*}
\frac{Du}{Dt} (x, t^n) & = \frac{d}{dt} u \bigl( X(t; x, t^n), t \bigr)\Bigr|_{t=t^n} = \frac{u^n - u^{n-1}\circ X_1(u^{n-1},\Delta t)}{\Delta t} (x) + O(\Delta t),
\end{align*}
where the symbol $\circ$ stands for the composition of functions,
\begin{align*}
(v\circ w) (x) \equiv  v\bigl( w(x) \bigr),
\end{align*}
for $v: \Omega \to \mathbb{R}^d$ and $w: \Omega \to \Omega$.
$X_1(w,\Delta t)(x)$ is called an upwind point of $x$ with respect to the velocity $w$.
The next proposition gives a sufficient condition to guarantee all upwind points are in $\Omega$.
\begin{Prop}[\!\!{\cite[Proposition~1]{RT-2002}}]\label{prop:X_in_omega}
Let $w\in W^{1,\infty}_0(\Omega)^d$ be a given function, and assume
\begin{align*}
\Delta t \|w\|_{1,\infty} < 1.
\end{align*}
Then, it holds that
\begin{align*}
X_1(w,\Delta t)(\Omega) = \Omega.
\end{align*}
\end{Prop}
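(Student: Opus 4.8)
The plan is to realize $X_1(w,\Delta t)$ as the restriction to $\Omega$ of a globally defined bijection of $\mathbb{R}^d$ that fixes the complement of $\Omega$ pointwise. First I would extend $w$ by zero outside $\Omega$, writing $\tilde{w}$ for the extension. Since $w\in W^{1,\infty}_0(\Omega)^d$ has vanishing trace on $\Gamma$, the zero-extension satisfies $\tilde{w}\in W^{1,\infty}(\mathbb{R}^d)^d$ with $\nabla\tilde{w}$ equal to the zero-extension of $\nabla w$, so $\tilde{w}$ is globally Lipschitz on the convex set $\mathbb{R}^d$ with Lipschitz constant $L\equiv\|\nabla w\|_{0,\infty}\le\|w\|_{1,\infty}$. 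Setting $\Phi(x)\equiv x-\tilde{w}(x)\Delta t$, the hypothesis $\Delta t\|w\|_{1,\infty}<1$ gives $\Delta t L<1$, which is exactly the smallness needed below.

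Next I would show that $\Phi:\mathbb{R}^d\to\mathbb{R}^d$ is a bijection. For injectivity, for any $x,y\in\mathbb{R}^d$ the Lipschitz bound yields
\begin{align*}
|\Phi(x)-\Phi(y)| \ge |x-y| - \Delta t\,|\tilde{w}(x)-\tilde{w}(y)| \ge (1-\Delta t L)\,|x-y|,
\end{align*}
so $\Phi(x)=\Phi(y)$ forces $x=y$. For surjectivity, fix $y\in\mathbb{R}^d$ and consider $T_y(x)\equiv y+\tilde{w}(x)\Delta t$; since $\Delta t L<1$, the map $T_y$ is a contraction on the complete space $\mathbb{R}^d$, so by the Banach fixed-point theorem it has a unique fixed point $x$, which satisfies $\Phi(x)=y$. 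Hence $\Phi$ is a bijection of $\mathbb{R}^d$.

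Finally I would exploit that $\tilde{w}$ vanishes on $A\equiv\mathbb{R}^d\setminus\Omega$ (both on $\Gamma$, by the zero trace, and outside $\bar{\Omega}$). Consequently $\Phi$ restricts to the identity on $A$, so $\Phi(A)=A$. Because $\Phi$ is a bijection of $\mathbb{R}^d$, it follows that
\begin{align*}
\Phi(\Omega) = \Phi(\mathbb{R}^d\setminus A) = \mathbb{R}^d\setminus\Phi(A) = \mathbb{R}^d\setminus A = \Omega.
\end{align*}
Since $\tilde{w}=w$ on $\Omega$, the map $\Phi$ agrees with $X_1(w,\Delta t)$ there, giving $X_1(w,\Delta t)(\Omega)=\Omega$.

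The main obstacle I anticipate is the justification of the zero-extension step: one must verify that a $W^{1,\infty}_0$ function extends by zero to a globally Lipschitz function with the stated constant, equivalently that the vanishing trace identifies $W^{1,\infty}_0(\Omega)^d$ with Lipschitz maps that are zero on $\Gamma$ and continue as the identity under $\Phi$ across $\partial\Omega$. This uses some regularity of $\partial\Omega$; everything afterward is the elementary contraction/fixed-point argument together with the bijection bookkeeping.
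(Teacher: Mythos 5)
Your proof is correct, and there is nothing in the paper to compare it against: the paper does not prove this proposition itself but imports it from \cite{RT-2002}, and your argument is essentially the standard one behind that cited result. The zero-extension step (giving a global Lipschitz bound $L=\|\nabla w\|_{0,\infty}\le\|w\|_{1,\infty}$, valid here because $\Omega$ is polygonal/polyhedral, hence Lipschitz, so zero trace does identify $W^{1,\infty}_0$ with Lipschitz functions vanishing on $\Gamma$ whose zero extension has the extended gradient), the contraction/fixed-point bijectivity of $\Phi$ on $\mathbb{R}^d$, and the set-theoretic bookkeeping $\Phi(\Omega)=\mathbb{R}^d\setminus\Phi(\mathbb{R}^d\setminus\Omega)=\Omega$ are all sound.
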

\par
For the sake of simplicity we assume that $\Omega$ is a polygonal $(d=2)$ or polyhedral $(d=3)$ domain.
Let $\mathcal{T}_h=\{K\}$ be a triangulation of $\barO\ (= \bigcup_{K\in\mathcal{T}_h} K )$, $h_K$ be a diameter of $K\in\mathcal{T}_h$, and $h\equiv\max_{K\in\mathcal{T}_h}h_K$ be the maximum element size.
Throughout this paper we consider a regular family of triangulations $\{\mathcal{T}_h\}_{h\downarrow 0}$ satisfying the inverse assumption~\cite{C-1978}, i.e., there exists a positive constant $\alpha_0$ independent of $h$ such that
\begin{align}
\fz{h}{h_K} \le \alpha_0,\quad \forall K\in \mathcal{T}_h,\quad \forall h.
\label{ieq:inverse_assumption}
\end{align}
We define function spaces $X_h$, $M_h$, $V_h$ and $Q_h$ by
\begin{align*}
X_h \equiv \{ v_h\in C(\barO)^d;\ v_{h|K} \in P_1(K)^d,\ \forall K \in\mathcal{T}_h \},\quad
M_h \equiv \{ q_h\in C(\barO);\ q_{h|K} \in P_1(K),\ \forall K \in\mathcal{T}_h \},
\end{align*}
$V_h \equiv X_h\cap V$ and $Q_h \equiv M_h \cap Q$, respectively, where $P_1(K)$ is the space of linear functions on $K\in\mathcal{T}_h$.
Let $N_T\equiv \lfloor T/\Delta t \rfloor$ be a total number of time steps, $\delta_0$ be a positive constant and $(\cdot,\cdot)_K$ be the $L^2(K)^d$ inner product.
We define bilinear forms $\mathcal{C}_h$ on $H^1(\Omega) \times H^1(\Omega)$ and $\mathcal{A}_h$ on $(V \times H^1(\Omega)) \times (V \times H^1(\Omega))$ by
\begin{align}
\mathcal{C}_h (p, q) & \equiv \delta_0\sum_{K\in\mathcal{T}_h} h_K^2 ( \nabla p, \nabla q )_K,\notag\\
\mathcal{A}_h \bigl( (u,p), (v,q) \bigr) & \equiv a(u,v)+b(v,p)+b(u,q) - \mathcal{C}_h(p, q).\label{def:Ah}
\end{align}
The bilinear form~$\mathcal{C}_h$ has been originally introduced in~\cite{BP-1984} in order to stabilize the pressure.
\par
Suppose $f\in C([0,T]; L^2(\Omega)^d)$ and $u^0\in V$.
Let an approximate function $u^0_h\in V_h$ of $u^0$ be given.
Our stabilized LG scheme for~\eqref{prob:NS} is to find $\{(u_h^n,\, p_h^n)\}_{n=1}^{N_T}\subset V_h \times Q_h$ such that for $n=1,\cdots, N_T$
\begin{align}
\Bigl(\fz{u_h^n-u_h^{n-1}\circ X_1(u_h^{n-1},\Delta t)}{\Delta t}, v_h\Bigr) + \mathcal{A}_h \bigl( (u^n_h, p^n_h), (v_h, q_h) \bigr) = (f^n, v_h),\quad
\forall (v_h, q_h) \in V_h\times Q_h.
\label{scheme_NS}
\end{align}
\begin{Rmk}\label{rmk:scheme}
(i)~By expanding $u_h^n$ and $p_h^n$ in terms of a basis of $V_h$ and $Q_h$, scheme~\eqref{scheme_NS} leads to a symmetric matrix of the form
\begin{align*}
\begin{pmatrix}
A & \ B^T \\
B & -C
\end{pmatrix}
,
\end{align*}
where $A$, $B$ and $C$ are sub-matrices derived from $\tfrac{1}{\Delta t}(u^n_h, v_h) + a(u^n_h, v_h)$, $b(u_h^n, q_h)$ and $\mathcal{C}_h(p_h^n, q_h)$, respectively, and the superscript ``\,$T$'' stands for the transposition. \smallskip\\
(ii)~The matrix is independent of time step $n$ and regular.
The regularity is derived from the fact that $(u_h^n, p_h^n)=(0, 0)$ when $u_h^{n-1}=f^n=0$ since we have
\begin{align*}
\fz{1}{\Delta t}\|u_h^n\|_0^2 + 2\nu \|D(u_h^n)\|_0^2 + \delta_0 \sum_{K\in\mathcal{T}_h} h_K^2 \|\nabla p_h^n\|_{L^2(K)^d}^2 = 0
\end{align*}
by substituting $(u_h^n, -p_h^n)\in V_h\times Q_h$ into $(v_h, q_h)$  in~\eqref{scheme_NS}. \smallskip\\
(iii)~There exists a unique solution $(u_h^n, p_h^n)$ if $X_1(u_h^{n-1},\Delta t)$ maps $\Omega$ into $\Omega$.
The condition is ensured if $\Delta t \|u_h^{n-1}\|_{1,\infty} < 1$, cf. Proposition~\ref{prop:X_in_omega}.
\end{Rmk}
%
%
%
%
%
%
%
%
%
%
\section{Main results}\label{sec:main_results}
In this section we show the main results, conditional stability and optimal error estimates of scheme~\eqref{scheme_NS}, which are proved in Section~\ref{sec:proofs}.
\par
We use the following norms and a seminorm,
$\|\cdot\|_{V_h}\equiv \|\cdot\|_V \equiv \|\cdot\|_1$, 
$\|\cdot\|_{Q_h} \equiv \|\cdot\|_Q \equiv \|\cdot\|_0$, 
\begin{align*}
\|u\|_{l^{\infty}(X)} &\equiv \max_{n=0,\cdots, N_T} \|u^n\|_{X},
&
\|u\|_{l^{2}_m(X)} &\equiv \biggl\{ \Delta t\sum_{n=1}^{m} \|u^n\|_X^2 \biggr\}^{1/2},
&
\|u\|_{l^{2}(X)} &\equiv \|u\|_{l^{2}_{N_T}(X)},
&
|p|_h & \equiv \biggl\{ \sum_{K\in \mathcal{T}_h} h_K^2 (\nabla p, \nabla p)_K \biggr\}^{1/2},
\end{align*}
for $m\in\{1,\cdots,N_T\}$ and $X=L^\infty(\Omega)$, $L^2(\Omega)$ and $H^1(\Omega)$.
$\ol{D}_{\Delta t}$ is the backward difference operator defined by
\begin{align*}
\ol{D}_{\Delta t} u^n \equiv \fz{u^n - u^{n-1}}{\Delta t}.
\end{align*}
\begin{Def}[Stokes projection]\label{def:StokesPrj}
For $(w,r) \in V \times Q$ we define the Stokes projection $(\hat{w}_h, \hat{r}_h) \in V_h\times Q_h$ of $(w, r)$ by
\begin{align}
\mathcal{A}_h\bigl( (\hat{w}_h, \hat{r}_h), (v_h, q_h) \bigr) = \mathcal{A} \bigl( (w, r), (v_h, q_h) \bigr), \quad \forall (v_h, q_h) \in V_h\times Q_h.
\label{eq:StokesPrj}
\end{align}
\end{Def}
\begin{Hyp}\label{hyp:regularity}
The solution $(u, p)$ of~\eqref{eq:NS_weak} satisfies
$u\in C([0,T]; W^{1,\infty}(\Omega)^d)\cap Z^2 \cap H^1(0,T; V\cap H^2(\Omega)^d)$
and
$p\in H^1(0,T;Q\cap H^1(\Omega))$.
\end{Hyp}
\begin{Thm}\label{thm:main_results}
Suppose Hypothesis~\ref{hyp:regularity} holds.
Then, there exist positive constants $h_0$ and $c_0$ independent of $h$ and $\Delta t$ such that, for any pair~$(h, \Delta t)$,
\begin{align}
h\in (0,h_0],\quad \Delta t\le c_0 h^{d/4},
\label{condition:h_dt}
\end{align}
the following hold.
\smallskip\\
(i)~Scheme~\eqref{scheme_NS} with $u_h^0$, the first component of the Stokes projection of $(u^0, 0)$ by~\eqref{eq:StokesPrj}, has a unique solution~$(u_h, p_h)=\{(u_h^n, p_h^n)\}_{n=1}^{N_T}\subset V_h\times Q_h$.
\medskip\\
(ii)~It holds that
\begin{align}
\|u_h\|_{l^\infty(L^\infty)}\le \|u\|_{C(L^\infty)}+1.
\label{ieq:stability}
\end{align}
(iii)~There exists a positive constant~$\bar{c}$ independent of $h$ and $\Delta t$ such that
\begin{align}
\|u_h-u\|_{l^\infty(H^1)},\ \ \Bigl\|\ol{D}_{\Delta t}u_h-\prz{u}{t}\Bigr\|_{l^2(L^2)},\ \ \|p_h-p\|_{l^2(L^2)} \le \bar{c} (\Delta t + h).
\label{ieq:main_results}
\end{align}
\end{Thm}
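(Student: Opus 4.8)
The plan is to measure the error against the Stokes projection of Definition~\ref{def:StokesPrj}. Writing $(\hat{u}_h^n, \hat{p}_h^n)$ for the projection of the exact $(u^n, p^n)$, I would set $e_h^n \equiv u_h^n - \hat{u}_h^n$ and $\eta_h^n \equiv p_h^n - \hat{p}_h^n$, so that $u_h^n - u^n = e_h^n + (\hat{u}_h^n - u^n)$ and likewise for the pressure. The projection parts are governed by standard error bounds for the stabilized Stokes problem, namely $\|\hat{u}_h^n - u^n\|_1 = O(h)$ and $\|\hat{p}_h^n - p^n\|_0 = O(h)$, together with the corresponding bounds on $\ol{D}_{\Delta t}(\hat{u}_h^n - u^n)$; these follow from the coercivity of $a$ on $V_h$ (Korn's inequality) and the stabilized discrete inf--sup property of $\mathcal{A}_h$, and I would quote them from the Oseen analysis~\cite{NT-Oseen}. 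All three quantities in~\eqref{ieq:main_results} then reduce, by the triangle inequality, to estimating the discrete errors $e_h^n$ and $\eta_h^n$.

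First I would derive the error equation for $(e_h^n, \eta_h^n)$ by subtracting scheme~\eqref{scheme_NS} from the weak form~\eqref{eq:NS_weak} tested on $V_h \times Q_h$ at $t=t^n$ and inserting the projection; its right-hand side collects the truncation error of the Euler--characteristics approximation of $Du/Dt$ ($O(\Delta t)$ under Hypothesis~\ref{hyp:regularity}), the time-increment error of the projected velocity (handled by the projection bounds), and the nonlinear convection discrepancy $u^{n-1}\circ X_1(u^{n-1},\Delta t) - u_h^{n-1}\circ X_1(u_h^{n-1},\Delta t)$. Choosing the test pair $(v_h,q_h)=(e_h^n,-\eta_h^n)$ cancels the mixed $b$-terms and leaves the nonnegative form $a(e_h^n,e_h^n)+\mathcal{C}_h(\eta_h^n,\eta_h^n)$, which yields the basic stability estimate; to upgrade to the $l^\infty(H^1)$ bound on the velocity and the $l^2(L^2)$ bound on $\ol{D}_{\Delta t}e_h$, I would instead test with $v_h=\ol{D}_{\Delta t}e_h^n$, the material-derivative quotient then telescoping, after summation over $n$, into $\|e_h\|_{l^\infty(H^1)}$ and $\|\ol{D}_{\Delta t}e_h\|_{l^2(L^2)}$. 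The composed term $e_h^{n-1}\circ X_1(u_h^{n-1},\Delta t)$ is controlled through the two Appendix lemmas --- that the Jacobian of $X_1(w,\Delta t)$ is $1+O(\Delta t\|w\|_{1,\infty})$ and that $\|v\circ X_1(w,\Delta t)\|_0\le(1+c\Delta t)\|v\|_0$ --- both valid once $\Delta t\|u_h^{n-1}\|_{1,\infty}<1$ so that, by Proposition~\ref{prop:X_in_omega}, the upwind points lie in $\Omega$. A discrete Gronwall inequality then closes the velocity estimate, and the pressure bound $\|\eta_h\|_{l^2(L^2)}$ follows from the stabilized discrete inf--sup inequality applied to the error equation, whose right-hand side is by then controlled through $\|\ol{D}_{\Delta t}e_h\|_{l^2(L^2)}$ and $\|e_h\|_{l^2(H^1)}$.

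The whole argument must run inside an induction on $n$, in the spirit of~\cite{BMMR-1997,Suli-1988}, and this is where the restriction~\eqref{condition:h_dt} is forced. The induction hypothesis at level $n-1$ asserts~\eqref{ieq:main_results} up to that level; combined with the inverse assumption~\eqref{ieq:inverse_assumption} it gives $\|e_h^{n-1}\|_{1,\infty} \le C h^{-d/2}\|e_h^{n-1}\|_1 \le C h^{-d/2}(\Delta t + h)$, whence $\Delta t\,\|u_h^{n-1}\|_{1,\infty} \le \Delta t\,\|u\|_{C(W^{1,\infty})} + C\,\Delta t\,h^{-d/2}(\Delta t + h) + o(1)$. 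Since $h \le h^{d/4}$ for $d \le 4$, the condition $\Delta t \le c_0 h^{d/4}$ makes the middle term at most $C c_0^2$, so that for $c_0$ small enough $\Delta t\,\|u_h^{n-1}\|_{1,\infty} < 1$; Proposition~\ref{prop:X_in_omega} then keeps the upwind points in $\Omega$, which via Remark~\ref{rmk:scheme}(iii) yields unique solvability at step $n$ and establishes~(i). The same $W^{1,\infty}$ control lets me split the convection discrepancy as $(u^{n-1}-u_h^{n-1})\circ X_1(u^{n-1},\Delta t)$ plus a remainder whose foot points differ by $\Delta t\,(u^{n-1}-u_h^{n-1})$; placing the Lipschitz constant on the smooth factor $u^{n-1}$ keeps the principal Gronwall coefficient at $O(\|u\|_{C(W^{1,\infty})})$, while the leftover, being quadratic in the error, is absorbed under~\eqref{condition:h_dt}. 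Advancing the energy estimate re-establishes~\eqref{ieq:main_results} at level $n$, and the discrete Sobolev inequality $\|e_h^n\|_{0,\infty} \le C h^{1-d/2}\|e_h^n\|_1 \le C h^{1-d/2}(\Delta t + h)$, which tends to $0$ precisely because $\Delta t \le c_0 h^{d/4}$, together with the vanishing $L^\infty$ projection error yields~\eqref{ieq:stability}, closing the induction.

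The hard part will be exactly this self-consistent induction: the nonlinear convection term couples the error at step $n$ to the $W^{1,\infty}$ size of $u_h^{n-1}$, which is itself only controllable through an inverse inequality fed by the very error bound one is trying to propagate. Keeping the amplification factor $\Delta t\,h^{-d/2}(\Delta t + h)$ bounded --- equivalently, keeping the discrete upwind points inside $\Omega$ at every step --- is what pins the stability condition at $\Delta t = O(h^{d/4})$, and verifying that the principal part of the convection error contributes only an $O(1)$ Gronwall coefficient, so that no exponential blow-up in the number of time steps occurs, is the crux of the estimate.
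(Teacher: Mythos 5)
Your overall architecture is essentially the paper's: error measured against the Stokes projection, an error equation whose residual splits into the characteristics truncation error, the foot-point discrepancy with the Lipschitz constant placed on the smooth velocity, the projection terms, and the composed discrete error; testing with $v_h=\ol{D}_{\Delta t}e_h^n$; an induction carrying $L^\infty$ and $W^{1,\infty}$ control of $u_h^{n-1}$ through inverse inequalities, with the condition $\Delta t\le c_0h^{d/4}$ arising exactly from keeping $\Delta t^2h^{-d/2}$ bounded; and the pressure recovered a posteriori from the discrete inf-sup property of $\mathcal{A}_h$ (Lemma~\ref{lem:stability_Ah}). This matches Proposition~\ref{prop:eh_epsh_Gronwall} and Steps~1--3 of the paper's proof.

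There is, however, a genuine gap at the energy step. When you test~\eqref{eq:e_epsilon_R} with $v_h=\ol{D}_{\Delta t}e_h^n$, the mixed term $b(\ol{D}_{\Delta t}e_h^n,\epsilon_h^n)$ survives and does not telescope: it is not cancelled by any direct choice of $q_h$ in the same equation (taking $q_h=-\epsilon_h^n$ there produces $-b(e_h^n,\epsilon_h^n)+\mathcal{C}_h(\epsilon_h^n,\epsilon_h^n)$, not the term you need; taking $q_h=-\ol{D}_{\Delta t}\epsilon_h^n$ leaves the non-telescoping mismatch $b(\ol{D}_{\Delta t}e_h^n,\epsilon_h^n)-b(e_h^n,\ol{D}_{\Delta t}\epsilon_h^n)$). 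The paper disposes of it with a separate structural identity: subtracting the $q_h$-equations of scheme~\eqref{scheme_NS} and of the Stokes projection at the two consecutive time levels gives $b(\ol{D}_{\Delta t}e_h^n,q_h)-\mathcal{C}_h(\ol{D}_{\Delta t}\epsilon_h^n,q_h)=0$ for all $q_h\in Q_h$, and putting $q_h=-\epsilon_h^n$ converts the unwanted term into $\mathcal{C}_h(\ol{D}_{\Delta t}\epsilon_h^n,\epsilon_h^n)\ge\ol{D}_{\Delta t}\bigl(\tfrac{\delta_0}{2}|\epsilon_h^n|_h^2\bigr)$; this is why the quantity propagated by Gronwall is $\nu\|D(e_h^n)\|_0^2+\tfrac{\delta_0}{2}|\epsilon_h^n|_h^2$ rather than the velocity energy alone. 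Moreover, this identity requires $b(u_h^{n-1},q_h)-\mathcal{C}_h(p_h^{n-1},q_h)=0$ also at $n=1$, which is exactly why the scheme is initialized with the Stokes projection of $(u^0,0)$ — in your sketch the initial datum enters only through projection error bounds, so this mechanism is missing. Separately, your ``discrete Sobolev inequality'' $\|v_h\|_{0,\infty}\le Ch^{1-d/2}\|v_h\|_1$ is false for $d=2$ (a factor $|\log h|^{1/2}$ is unavoidable); the paper instead uses $\|v_h\|_{0,\infty}\le\alpha_{21}h^{-d/6}\|v_h\|_1$, valid for $d=2,3$, which still gives the $l^\infty(L^\infty)$ bound under $\Delta t\le c_0h^{d/4}$, so this slip is repairable without altering your argument.
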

\begin{Rmk}
Since the initial pressure $p^0$ is not given in~\eqref{prob:NS}, we cannot practice the Stokes projection of $(u^0, p^0)$.
That is the reason why we employ the Stokes projection of $(u^0, 0)$ and set the first component as $u_h^0$.
This choice is sufficient for the error estimates~\eqref{ieq:main_results} and also~\eqref{ieq:L2} in Theorem~\ref{thm:main_results_L2} below.
\end{Rmk}
\begin{Hyp}\label{hyp:L2}
The Stokes problem is regular, i.e., for any $g\in L^2(\Omega)^d$ the solution $(w, r)\in V\times Q$ of the Stokes problem,
\begin{align*}
\mathcal{A}\bigl( (w, r), (v, q) \bigr) = (g, v), \quad \forall (v, q) \in V\times Q,
\end{align*}
belongs to $H^2(\Omega)^d\times H^1(\Omega)$ and the estimate
\begin{align*}
\|w\|_2 + \|r\|_1 \le c_R\|g\|_0
\end{align*}
holds, where $c_R$ is a positive constant independent of $g$, $w$ and $r$.
\end{Hyp}
\begin{Thm}\label{thm:main_results_L2}
Suppose Hypotheses~\ref{hyp:regularity} and~\ref{hyp:L2} hold.
Then, there exists a positive constant~$\tilde{c}$ independent of $h$ and $\Delta t$ such that
\begin{align}
\|u_h-u\|_{l^\infty(L^2)} \le \tilde{c} (\Delta t + h^2),
\label{ieq:L2}
\end{align}
where $u_h$ is the first component of the solution of~\eqref{scheme_NS} stated in Theorem~\ref{thm:main_results}-(i).
\end{Thm}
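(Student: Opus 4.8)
The plan is to run an Aubin--Nitsche duality argument \emph{in space}, superimposed on the discrete energy method already used for Theorem~\ref{thm:main_results}, so as to upgrade the spatial order of the velocity error from $h$ to $h^2$ while retaining the temporal order $\Delta t$. First I would split the error through the Stokes projection: denoting by $(\hat{u}_h^n, \hat{r}_h^n)\in V_h\times Q_h$ the Stokes projection~\eqref{eq:StokesPrj} of $(u^n, p^n)$ and setting
\[
e_h^n \equiv u_h^n - \hat{u}_h^n, \qquad \eta^n \equiv \hat{u}_h^n - u^n, \qquad \epsilon_h^n \equiv p_h^n - \hat{r}_h^n,
\]
I have $u_h^n - u^n = e_h^n + \eta^n$. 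The projection part is disposed of at once: using Hypothesis~\ref{hyp:L2} and the standard duality argument for the Stokes projection one obtains the \emph{optimal} bound $\|\eta^n\|_0 \le c h^2 (\|u^n\|_2 + \|p^n\|_1)$, which is $O(h^2)$ uniformly in $n$ by Hypothesis~\ref{hyp:regularity}. The same estimate applied to the pair $(0,-p^0)$, whose Stokes projection is exactly $(e_h^0,\epsilon_h^0)$ (by linearity, since $u_h^0$ and $\hat{u}_h^0$ are the velocity components of the Stokes projections of $(u^0,0)$ and $(u^0,p^0)$), gives $\|e_h^0\|_0 \le c h^2 \|p^0\|_1 = O(h^2)$, so the initial discrete error is already of the target order.

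Next I would derive the error equation for $(e_h^n,\epsilon_h^n)$. Subtracting scheme~\eqref{scheme_NS} from the weak form~\eqref{eq:NS_weak} tested against $(v_h,q_h)\in V_h\times Q_h$ and invoking~\eqref{eq:StokesPrj} to replace $\mathcal{A}\bigl((u^n,p^n),\cdot\bigr)$ by $\mathcal{A}_h\bigl((\hat{u}_h^n,\hat{r}_h^n),\cdot\bigr)$, the stiffness contributions telescope (the pressure-stabilization consistency being absorbed into $\eta^n$) and I am left with
\[
\Bigl( \fz{e_h^n - e_h^{n-1}\circ X_1(u_h^{n-1},\Delta t)}{\Delta t}, v_h \Bigr) + \mathcal{A}_h\bigl( (e_h^n, \epsilon_h^n),(v_h,q_h) \bigr) = (R^n, v_h),
\]
where $R^n$ collects: (a) the characteristics--truncation error $\frac{Du}{Dt}(t^n) - \{u^n - u^{n-1}\circ X_1(u^{n-1},\Delta t)\}/\Delta t$; (b) the transported projection error $\{\eta^n - \eta^{n-1}\circ X_1(u^{n-1},\Delta t)\}/\Delta t$; and (c) the nonlinear term produced by replacing the exact upwind map $X_1(u^{n-1},\Delta t)$ by the discrete one $X_1(u_h^{n-1},\Delta t)$, whose leading part is $\nabla\hat{u}_h^{n-1}\,(e_h^{n-1}+\eta^{n-1})$. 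Choosing $(v_h,q_h)=(e_h^n,-\epsilon_h^n)$ makes the form coercive, since $\mathcal{A}_h\bigl((e_h^n,\epsilon_h^n),(e_h^n,-\epsilon_h^n)\bigr) = 2\nu\|D(e_h^n)\|_0^2 + \delta_0 |\epsilon_h^n|_h^2 \ge 0$, and the composition estimates proved in the Appendix give, under the mapping property of Proposition~\ref{prop:X_in_omega},
\[
\Bigl( \fz{e_h^n - e_h^{n-1}\circ X_1(u_h^{n-1},\Delta t)}{\Delta t}, e_h^n \Bigr) \ge \fz{1}{2\Delta t}\bigl( \|e_h^n\|_0^2 - \|e_h^{n-1}\|_0^2 \bigr) - c\|e_h^{n-1}\|_0^2,
\]
the near-isometry of the composition operator being controlled through $\Delta t\,\|u_h^{n-1}\|_{1,\infty}$, which stays bounded by combining the inverse assumption~\eqref{ieq:inverse_assumption}, the stability~\eqref{ieq:stability}, the $H^1$ estimate~\eqref{ieq:main_results}, and the condition~\eqref{condition:h_dt}.

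It then remains to bound $(R^n,e_h^n)$ at order $\Delta t + h^2$. Part (a) is handled in the averaged sense: $\Delta t\sum_n\|R^n_{(a)}\|_0^2 \le c(\Delta t)^2$ by the regularity $u\in Z^2$, so after Cauchy--Schwarz and Young its contribution is $O((\Delta t)^2)$ plus an absorbable multiple of $\|e_h^n\|_0^2$. Part (c) is estimated directly by $c(\|e_h^{n-1}\|_0 + h^2)\|e_h^n\|_0$, using $\|\hat{u}_h^{n-1}\|_{1,\infty}\le c$ (from $u\in C(W^{1,\infty})$ and the projection) together with $\|\eta^{n-1}\|_0 = O(h^2)$. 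The genuinely delicate term is part (b): a crude bound only yields $O(h)$ because $\|\nabla\eta^{n-1}\|_0 = O(h)$, and I expect this to be the main obstacle. The remedy is to \emph{not} differentiate $\eta$: writing $\{\eta^n - \eta^{n-1}\circ X_1(u^{n-1},\Delta t)\}/\Delta t$ as $\{\eta^n-\eta^{n-1}\}/\Delta t$ plus $\{\eta^{n-1}-\eta^{n-1}\circ X_1(u^{n-1},\Delta t)\}/\Delta t$, the first piece is $O(h^2)$ in $L^2$ thanks to $\partial\eta/\partial t = O(h^2)$ from the time regularity in Hypothesis~\ref{hyp:regularity}, while for the second piece I integrate by parts against $e_h^n$ using $\nabla\cdot u^{n-1}=0$, so that the spatial derivative falls on $e_h^n$; this leaves a contribution of size $\|\eta^{n-1}\|_0\,\|u^{n-1}\|_{0,\infty}\,\|\nabla e_h^n\|_0 = O(h^2)\,\|D(e_h^n)\|_0$, which Young's inequality absorbs into $2\nu\|D(e_h^n)\|_0^2$ at the cost of $O(h^4)$, the $O(\Delta t)$ remainder of the transport increment being controlled rigorously by the Appendix lemmas. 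Collecting the estimates, multiplying by $2\Delta t$ and summing over $n$ yields
\[
\|e_h^m\|_0^2 \le \|e_h^0\|_0^2 + c(\Delta t + h^2)^2 + c\,\Delta t\sum_{n=0}^{m-1}\|e_h^n\|_0^2,
\]
whence the discrete Gronwall inequality gives $\|e_h^m\|_0 \le c(\Delta t + h^2)$ for all $m\le N_T$. The triangle inequality $\|u_h^m - u^m\|_0 \le \|e_h^m\|_0 + \|\eta^m\|_0$ then establishes~\eqref{ieq:L2}.
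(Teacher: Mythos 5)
Your overall architecture (Stokes-projection splitting, $O(h^2)$ projection bounds from Hypothesis~\ref{hyp:L2}, testing with $(e_h^n,-\epsilon_h^n)$, a duality trick to avoid differentiating $\eta$, discrete Gronwall) matches the paper's, and your parts (a) and (b) are essentially the paper's Lemma~\ref{lem:estimates_R_L2}. But there is a genuine gap at the very step that launches your energy inequality, namely the claimed bound
\begin{align*}
\Bigl( \fz{e_h^n - e_h^{n-1}\circ X_1(u_h^{n-1},\Delta t)}{\Delta t}, e_h^n \Bigr) \ge \fz{1}{2\Delta t}\bigl( \|e_h^n\|_0^2 - \|e_h^{n-1}\|_0^2 \bigr) - c\|e_h^{n-1}\|_0^2,
\end{align*}
with $c$ independent of $h$ and $\Delta t$. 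This requires $\|e_h^{n-1}\circ X_1(u_h^{n-1},\Delta t)\|_0^2 \le (1+c\,\Delta t)\|e_h^{n-1}\|_0^2$, i.e.\ a Jacobian bound $J\ge 1-c\,\Delta t$ for the \emph{discrete} flow map. That is not available: $u_h^{n-1}$ is neither divergence-free nor uniformly bounded in $W^{1,\infty}$. All that Theorem~\ref{thm:main_results} supplies is $\Delta t\,\|u_h^{n-1}\|_{1,\infty}\le\delta_1$, and under \eqref{condition:h_dt} the best available bounds allow $\|u_h^{n-1}\|_{1,\infty}$ to be as large as $O(h^{-d/4})$, so $|J-1|$ can be of size $O(1)$, not $O(\Delta t)$; Lemma~\ref{lem:v-vX}-(i) gives only $J\ge 1/2$. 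A per-step multiplicative loss $1+O(1)$ turns the Gronwall factor into $\exp(cT/\Delta t)$, and the argument collapses. Note that a ``near-isometry controlled through $\Delta t\|u_h^{n-1}\|_{1,\infty}$'', which is what you invoke, is exactly what cannot be had here.

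The paper circumvents this by never composing $e_h^{n-1}$ with the discrete flow on the left-hand side: it keeps the plain quotient $(\ol{D}_{\Delta t}e_h^n, v_h)$ there and moves the transport increment to the right as $R_{h4}^n$, which is then measured in the dual norm $\|\cdot\|_{V_h^\prime}$ after splitting it through the \emph{exact}-velocity map $X_1(u^{n-1},\Delta t)$: the piece $e_h^{n-1}-e_h^{n-1}\circ X_1(u^{n-1},\Delta t)$ is handled by \eqref{ieq:v-vX_2}, whose factor $\|u^{n-1}\|_{1,\infty}$ is bounded and which costs only $\|e_h^{n-1}\|_0$, while the difference-of-flows piece is bounded in $L^1$ by \eqref{ieq:v-vX_4} and paired with $\|v_h\|_{0,\infty}\le\alpha_{21}h^{-d/6}\|v_h\|_1$, the factor $h^{-d/6}\|e_h^{n-1}\|_1$ being $O(h^{d/12})$ by \eqref{ieq:ehn_H1} and \eqref{condition:h_dt}; the resulting $\|e_h^n\|_1$-pairings are absorbed by the viscous term through Korn's inequality and Young's inequality. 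The same mechanism is also what your part (c) needs: your estimate there invokes $\|\hat{u}_h^{n-1}\|_{1,\infty}\le c$, which the paper's toolkit does not provide (an inverse-inequality argument yields it for $d=2$ but fails for $d=3$); the paper instead splits through $u^{n-1}$ (using \eqref{ieq:v-vX_3}, with the bounded norm $\|u^{n-1}\|_{1,\infty}$) and through $\eta^{n-1}$ (using \eqref{ieq:v-vX_4} plus the $h^{-d/6}$ inverse inequality). In short, the duality idea you correctly apply to the $\eta$-transport must also be applied, after the exact/discrete flow splitting, to the $e_h$- and $\hat{u}_h$-transports; as written, your per-step inequality is unproved and the proof cannot be closed.
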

\begin{Rmk}
Hypothesis~\ref{hyp:L2} holds, e.g., if $\Omega$ is convex in $\mathbb{R}^2$, cf.~\cite{GR-1986}.
\end{Rmk}
%
%
%
%
%
%
%
%
%
%
\section{Proofs of Theorems~\ref{thm:main_results} and~\ref{thm:main_results_L2}}\label{sec:proofs}
%
We use $c$, $c_u$ and $c_{(u,p)}$ to represent the generic positive constants independent of the discretization parameters $h$ and~$\Delta t$.
$c_u$ and $c_{(u,p)}$ are constants depending on $u$ and $(u,p)$, respectively.
The symbol ``$\prime$ (prime)'' is sometimes put in order to distinguish between two constants, e.g.,~$c_u$ and~$c_u^\prime$.
%
\subsection{Preparations}
We recall some lemmas and a proposition, which are directly used in our proofs.
The next lemma is derived from Korn's inequality~\cite{DL-1976}.
\begin{Lem}\label{lem:Korn}
Let $\Omega$ be a bounded domain with a Lipschitz-continuous boundary.
Then, there exists a positive constant $\alpha_1$ and the following inequalities hold.
\begin{align}
\|D(v)\|_0 \le \|v\|_1 \le \alpha_1 \|D(v)\|_0,\qquad \forall v \in H^1_0(\Omega)^d.
\end{align}
\end{Lem}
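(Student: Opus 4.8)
The plan is to prove the two inequalities separately, treating the left one as a routine algebraic estimate and the right one (the genuine content) as Korn's inequality specialized to $H^1_0(\Omega)^d$.

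For the left inequality I would argue pointwise. Writing $D_{ij}(v) = \tfrac{1}{2}(\partial_i v_j + \partial_j v_i)$ and using $(a+b)^2 \le 2(a^2+b^2)$, one gets $D_{ij}(v)^2 \le \tfrac{1}{2}\{(\partial_i v_j)^2 + (\partial_j v_i)^2\}$ for each pair $(i,j)$. Summing over $i,j$ and exploiting the symmetry of the double sum yields $|D(v)|^2 \le |\nabla v|^2$ almost everywhere in $\Omega$. Integrating gives $\|D(v)\|_0 \le \|\nabla v\|_0 \le \|v\|_1$, which is the first bound.

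For the right inequality I would first establish the gradient bound $\|\nabla v\|_0 \le \sqrt{2}\,\|D(v)\|_0$ on the dense subspace $C_c^\infty(\Omega)^d$ and then pass to the limit. For $v \in C_c^\infty(\Omega)^d$, integration by parts (with no boundary terms, since $v$ has compact support) gives $\int_\Omega \partial_i v_j\, \partial_j v_i \,dx = \int_\Omega \partial_i v_i\, \partial_j v_j\,dx$ for all $i,j$, so that $\int_\Omega \sum_{i,j} \partial_i v_j\, \partial_j v_i \,dx = \|\nabla\cdot v\|_0^2 \ge 0$. Expanding $2\|D(v)\|_0^2 = \tfrac{1}{2}\int_\Omega \sum_{i,j}(\partial_i v_j + \partial_j v_i)^2\,dx$ and splitting off the cross term just identified, one obtains the identity $2\|D(v)\|_0^2 = \|\nabla v\|_0^2 + \|\nabla\cdot v\|_0^2 \ge \|\nabla v\|_0^2$, hence $\|\nabla v\|_0 \le \sqrt{2}\,\|D(v)\|_0$. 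Since $C_c^\infty(\Omega)^d$ is dense in $H^1_0(\Omega)^d$, this extends to every $v \in H^1_0(\Omega)^d$. Finally, because $\Omega$ is bounded, the Poincar\'e inequality $\|v\|_0 \le C_P\|\nabla v\|_0$ holds on $H^1_0(\Omega)^d$, so $\|v\|_1^2 = \|v\|_0^2 + \|\nabla v\|_0^2 \le (1+C_P^2)\|\nabla v\|_0^2 \le 2(1+C_P^2)\|D(v)\|_0^2$, and one may take $\alpha_1 = \sqrt{2(1+C_P^2)}$. Alternatively, the same bound follows by directly invoking Korn's inequality from~\cite{DL-1976} and absorbing the lower-order term via Poincar\'e.

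The main obstacle is the right inequality, i.e.\ showing that the symmetric gradient $D(v)$ alone controls the full gradient $\nabla v$. For a general $v \in H^1(\Omega)^d$ this is Korn's second inequality and is genuinely delicate, requiring the Lipschitz regularity of $\partial\Omega$ together with a compactness argument. The simplification exploited here is that only the $H^1_0$ case is needed, for which the integration-by-parts identity makes the argument elementary, and the hypotheses enter only through density and Poincar\'e (in fact boundedness of $\Omega$ alone suffices). I would take care to verify that the cross term carries a $+$ sign, so that $\|\nabla\cdot v\|_0^2$ can simply be discarded rather than needing to be bounded.
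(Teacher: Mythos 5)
Your proof is correct, but it takes a different route from the paper: the paper offers no argument at all for this lemma, simply stating that it ``is derived from Korn's inequality'' and citing Duvaut--Lions, i.e.\ it leans on Korn's second inequality for general $H^1(\Omega)^d$ fields (a genuinely deep result whose proof needs the Lipschitz boundary, and which must then be combined with an absorption or compactness argument to remove the $\|v\|_0$ term). You instead exploit the homogeneous boundary condition directly: the double integration by parts on $C_c^\infty(\Omega)^d$ yields the identity $2\|D(v)\|_0^2=\|\nabla v\|_0^2+\|\nabla\cdot v\|_0^2$, the discarded cross term indeed carries a $+$ sign, and density plus Poincar\'e finish the job with the explicit constant $\alpha_1=\sqrt{2(1+C_P^2)}$. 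What your approach buys is a self-contained elementary proof that never invokes Korn's second inequality, produces a computable constant, and shows that for the $H^1_0$ case the Lipschitz hypothesis in the lemma's statement is superfluous (boundedness alone suffices, via Poincar\'e); what the paper's citation buys is brevity and a statement that remains meaningful in the more general settings where the zero-trace trick is unavailable. Both inequalities in your write-up check out, including the pointwise bound $|D(v)|^2\le|\nabla v|^2$ for the easy direction.
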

\noindent
We use inverse inequalities and interpolation properties.
\begin{Lem}[\!\!\cite{C-1978}]\label{lem:inverse_inequality}
There exist positive constants $\alpha_{2i}$, $i=0,\cdots,4$, independent of $h$ and the following inequalities hold.
\begin{subequations}\label{ieq:inverse_interpolation_property}
\begin{align}
&& |q_h|_h & \le \alpha_{20} \|q_h\|_0, & \forall q_h & \in Q_h,&&
\label{ieq:q_h}\\
&&\|v_h\|_{0,\infty} & \le \alpha_{21} h^{-d/6}\|v_h\|_1, & \forall v_h & \in V_h,&&
\label{ieq:inverse_0_inf}\\
&&\|v_h\|_{1,\infty} & \le \alpha_{22} h^{-d/2}\|v_h\|_1, & \forall v_h & \in V_h,&&
\label{ieq:inverse_1_inf}\\
&&\|\Pi_h v\|_{0,\infty} & \le \|v\|_{0,\infty}, & \forall v & \in C(\bar{\Omega})^d,&&
\label{ieq:interpolation_0_inf}\\
&&\|\Pi_h v\|_{1,\infty} & \le \alpha_{23} \|v\|_{1,\infty}, & \forall v & \in W^{1,\infty}(\Omega)^d,&&
\label{ieq:interpolation_1_inf}\\
\phantom{MMMMMMMM}
&&\|\Pi_h v - v\|_1 & \le \alpha_{24} h \|v\|_2, & \forall v & \in H^2(\Omega)^d,&&
\phantom{MMMMMM}
\label{ieq:interpolation_1_first_order}
\end{align}
where $\Pi_h: C(\bar{\Omega})^d\to X_h$ is the Lagrange interpolation operator.
\end{subequations}
\end{Lem}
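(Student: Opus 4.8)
The plan is to reduce every inequality to the fixed reference simplex $\hat{K}$ through the affine maps $F_K(\hat{x})=B_K\hat{x}+b_K$ with $K=F_K(\hat{K})$, and then to use two standard facts: on the finite-dimensional space $P_1(\hat{K})$ all (semi)norms are equivalent, and for a regular family satisfying the inverse assumption~\eqref{ieq:inverse_assumption} the scaling factors are uniformly controlled, $\|B_K\|\le c\,h_K$, $\|B_K^{-1}\|\le c\,h_K^{-1}$, $|\det B_K|\sim h_K^{d}$, with $h_K\sim h$. Since every constant produced this way depends only on $\hat{K}$, on $\alpha_0$, and on the fixed geometry, it is independent of $h$; this is exactly where~\eqref{ieq:inverse_assumption} enters. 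The whole lemma is classical and contained in~\cite{C-1978}, so I would present each estimate schematically via the master scaling bound: for $v_h\in P_1(K)$ and $0\le m\le l$, scaling a $W^{l,p}(K)$-seminorm to $\hat{K}$, invoking norm equivalence on $P_1(\hat{K})$, and scaling back gives $|v_h|_{l,p,K}\le c\,h_K^{\,m-l+d/p-d/q}\,|v_h|_{m,q,K}$.

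For the inverse inequalities I would instantiate this bound. Taking $l=1,p=2,m=0,q=2$ gives $|q_h|_{1,2,K}\le c\,h_K^{-1}\|q_h\|_{0,K}$; the weight $h_K^2$ in $|q_h|_h$ cancels the $h_K^{-2}$ locally, so summation over $K$ yields~\eqref{ieq:q_h} with no quasi-uniformity needed. Taking $l=m=1,p=\infty,q=2$ gives $|v_h|_{1,\infty,K}\le c\,h_K^{-d/2}|v_h|_{1,2,K}$, and the maximum over $K$ together with $h_K\sim h$ yields~\eqref{ieq:inverse_1_inf}. The one less obvious estimate is~\eqref{ieq:inverse_0_inf}: the naive $L^\infty$--$L^2$ bound only gives $h^{-d/2}$, so instead I would use the local inverse estimate with $q=6$, $\|v_h\|_{0,\infty,K}\le c\,h_K^{-d/6}\|v_h\|_{0,6,K}$, and then absorb the $L^6$ norm by the Sobolev embedding $H^1(\Omega)\hookrightarrow L^6(\Omega)$ (valid for $d\le3$), which produces precisely the exponent $d/6$ and $\|v_h\|_1$ on the right.

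For the interpolation estimates I would argue elementwise. Inequality~\eqref{ieq:interpolation_0_inf} is constant-free: on each $K$, $\Pi_h v$ is linear, so its value at $x$ is the convex combination $\sum_i\lambda_i(x)v(a_i)$ of the vertex values with barycentric weights $\lambda_i\ge0$, $\sum_i\lambda_i=1$, and convexity of the Euclidean norm gives $|\Pi_h v(x)|\le\max_i|v(a_i)|\le\|v\|_{0,\infty}$. For the $W^{1,\infty}$-stability~\eqref{ieq:interpolation_1_inf}, the care needed is that the crude reference bound $|\hat\Pi\hat v|_{1,\infty,\hat K}\le c\|\hat v\|_{1,\infty,\hat K}$ would, after scaling, leave a spurious factor $h_K^{-1}$ on the $L^\infty$ part. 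I avoid this by using that $\hat\Pi$ reproduces constants: since $\sum_i\hat\nabla\hat\phi_i=0$, one has $\hat\nabla\hat\Pi\hat v=\sum_i(\hat v(\hat a_i)-\hat v(\hat a_0))\hat\nabla\hat\phi_i$, hence $|\hat\Pi\hat v|_{1,\infty,\hat K}\le c\,|\hat v|_{1,\infty,\hat K}$; scaling this seminorm-to-seminorm bound back, the factors $\|B_K^{-1}\|\,\|B_K\|$ combine to the uniformly bounded condition number, giving $|\Pi_h v|_{1,\infty,K}\le c\,|v|_{1,\infty,K}$, which with~\eqref{ieq:interpolation_0_inf} yields~\eqref{ieq:interpolation_1_inf}. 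Finally,~\eqref{ieq:interpolation_1_first_order} is the first-order $H^1$ interpolation error: the Bramble--Hilbert lemma on $\hat{K}$ (using that $\hat\Pi$ preserves $P_1$) gives $|\hat v-\hat\Pi\hat v|_{m,\hat K}\le c\,|\hat v|_{2,\hat K}$ for $m=0,1$, scaling back produces $|v-\Pi_h v|_{m,K}\le c\,h_K^{\,2-m}|v|_{2,K}$, and squaring, summing over $K$, and using $h_K\le h$ gives $\|\Pi_h v-v\|_1\le\alpha_{24}h\|v\|_2$; here $v\in H^2(\Omega)^d$ is continuous for $d\le3$, so $\Pi_h v$ is well defined.

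None of these steps is genuinely hard — the lemma is a standard compilation — so the real work is bookkeeping: tracking the $B_K$-scaling so that the stated powers of $h$ come out and the constants are manifestly $h$-independent. If I had to single out the delicate point, it is the derivation of~\eqref{ieq:inverse_0_inf}: recognizing that the $h^{-d/6}$ scaling (rather than the crude $h^{-d/2}$) is obtained by inserting the critical Sobolev embedding $H^1\hookrightarrow L^6$ before applying the inverse estimate, exactly the embedding available in dimension $d\le3$. A secondary subtlety is the constant-reproduction argument in~\eqref{ieq:interpolation_1_inf}, which is what keeps the $W^{1,\infty}$ bound from degrading by a negative power of $h$.
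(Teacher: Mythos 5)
Your proof is correct, and it is exactly the classical scaling argument that the paper's citation to Ciarlet~\cite{C-1978} refers to --- the paper gives no proof of this lemma, only the reference, so there is nothing to diverge from. Your two ``delicate'' points are handled properly: the $L^6$ Sobolev embedding route to the exponent $h^{-d/6}$ in~\eqref{ieq:inverse_0_inf} (consistent with Remark~\ref{rmk:interpolation_property}-(ii) on non-optimality for $d=2$) and the constant-reproduction argument for~\eqref{ieq:interpolation_1_inf}, and your bookkeeping of which estimates need the inverse assumption~\eqref{ieq:inverse_assumption} versus mere shape regularity matches Remark~\ref{rmk:interpolation_property}-(i).
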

\begin{Rmk}\label{rmk:interpolation_property}
(i)~Although the inverse assumption~\eqref{ieq:inverse_assumption} is supposed throughout the paper, it is not required for the estimates~\eqref{ieq:q_h}, \eqref{ieq:interpolation_0_inf}, \eqref{ieq:interpolation_1_inf} and~\eqref{ieq:interpolation_1_first_order}.
The assumption that $\{\mathcal{T}_h\}_{h\downarrow 0}$ is regular is sufficient for them.
\ 
(ii)~The inverse inequality~\eqref{ieq:inverse_0_inf} is sufficient in this paper, while it is not optimal for $d=2$.
\ 
(iii)~We note $\alpha_{23} \ge 1$.
\end{Rmk}
\begin{Lem}[\!\!{\cite[Lemma~3.2]{FS-1991}}]\label{lem:stability_Ah}
There exists a positive constant $\alpha_{30}$ independent of $h$ such that for any $h$
\begin{align}
\inf_{(w_h,r_h) \in V_h\times Q_h} \sup_{(v_h,q_h)\in V_h\times Q_h} \fz{\mathcal{A}_h \bigl( (w_h,r_h), (v_h,q_h) \bigr)}{\|(w_h,r_h)\|_{V\times Q} \|(v_h,q_h)\|_{V\times Q}}\ge \alpha_{30}.
\label{ieq:stability_Ah}
\end{align}
\end{Lem}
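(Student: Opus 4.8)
The plan is to establish the inf-sup bound~\eqref{ieq:stability_Ah} by the standard Brezzi--Pitk\"aranta/Franca--Stenberg device: for an arbitrary $(w_h,r_h)\in V_h\times Q_h$ I would construct an \emph{explicit} test pair $(v_h,q_h)$ whose $V\times Q$-norm is bounded by a constant multiple of $\|(w_h,r_h)\|_{V\times Q}$ and for which $\mathcal{A}_h\bigl((w_h,r_h),(v_h,q_h)\bigr)\ge c\,\|(w_h,r_h)\|_{V\times Q}^{2}$; dividing then yields~\eqref{ieq:stability_Ah}. The delicate point around which the whole argument is organized is that the stabilization $\mathcal{C}_h$ by itself controls only the mesh-weighted seminorm $|r_h|_h$ and not the full norm $\|r_h\|_0$ of the pressure, so $\|r_h\|_0$ must be recovered by a separate mechanism.

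First I would test with the diagonal choice $(v_h,q_h)=(w_h,-r_h)$. Using $b(w_h,r_h)+b(w_h,-r_h)=0$ one finds
\[
\mathcal{A}_h\bigl((w_h,r_h),(w_h,-r_h)\bigr)=a(w_h,w_h)+\mathcal{C}_h(r_h,r_h)=2\nu\|D(w_h)\|_0^2+\delta_0|r_h|_h^2,
\]
which, via Korn's inequality (Lemma~\ref{lem:Korn}), controls $\|w_h\|_1^2$ and $|r_h|_h^2$ but leaves $\|r_h\|_0$ untouched. To recover it I would invoke the continuous inf-sup (LBB) condition for the pair $(V,Q)$~\cite{GR-1986}: there is a constant $\beta>0$ and, for each $r_h\in Q_h\subset Q$, a field $v_r\in V$ with $b(v_r,r_h)\le-\beta\|r_h\|_0^2$ and $\|v_r\|_1\le\|r_h\|_0$. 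Since $v_r$ lies only in $H^1_0$, the Lagrange interpolant $\Pi_h$ of Lemma~\ref{lem:inverse_inequality} is not available; I would instead use a Cl\'ement-/Scott--Zhang-type quasi-interpolation $I_h:V\to V_h$ that is $H^1$-stable, $\|I_h v_r\|_1\le c\|v_r\|_1$, and satisfies the local estimate $\sum_{K}h_K^{-2}\|v_r-I_h v_r\|_{0,K}^2\le c\|v_r\|_1^2$.

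The associated consistency error is controlled by element-wise integration by parts: because $v_r-I_h v_r$ and $r_h$ are continuous and $v_r,I_hv_r$ vanish on $\Gamma$, the interface and boundary contributions cancel, leaving
\[
b(I_h v_r,r_h)=b(v_r,r_h)+\sum_{K\in\mathcal{T}_h}(I_h v_r-v_r,\nabla r_h)_K,
\]
and Cauchy--Schwarz with the local bound gives $|b(I_h v_r,r_h)-b(v_r,r_h)|\le c\,\|r_h\|_0\,|r_h|_h$, i.e. the non-conforming error lands exactly in the seminorm $|r_h|_h$ that $\mathcal{C}_h$ controls. I would then combine the two tests by taking $(v_h,q_h)=(w_h-\rho I_h v_r,-r_h)$ with a small parameter $\rho>0$. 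The cross terms $\rho\,a(w_h,I_h v_r)$ and $\rho\,[b(I_h v_r,r_h)-b(v_r,r_h)]$ are absorbed by Young's inequality into $2\nu\|D(w_h)\|_0^2$ and $\delta_0|r_h|_h^2$, while $-\rho\,b(v_r,r_h)\ge\rho\beta\|r_h\|_0^2$ supplies the missing $\|r_h\|_0^2$.

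Choosing $\rho$ small enough — depending only on $\nu$, $\alpha_1$, $\beta$, $\delta_0$ and the interpolation constants, hence independent of $h$ — yields $\mathcal{A}_h\bigl((w_h,r_h),(v_h,q_h)\bigr)\ge c_1(\|w_h\|_1^2+\|r_h\|_0^2)$, whereas $\|(v_h,q_h)\|_{V\times Q}\le c_2\|(w_h,r_h)\|_{V\times Q}$; this gives the bound with $\alpha_{30}=c_1/c_2$. I expect the main obstacle to be precisely the pressure recovery sketched above: securing an $H^1$-stable interpolation with the correct local approximation so that the non-conforming consistency term falls exactly into the weighted seminorm $|r_h|_h$, and then balancing all the constants through the single free parameter $\rho$ so that the coefficients of $\|w_h\|_1^2$, $|r_h|_h^2$ and $\|r_h\|_0^2$ remain simultaneously positive.
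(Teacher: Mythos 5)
Your proposal is correct and takes essentially the same route as the paper's source: the paper gives no proof of this lemma but cites Lemma~3.2 of \cite{FS-1991}, and your argument --- the diagonal test $(w_h,-r_h)$ yielding $2\nu\|D(w_h)\|_0^2+\delta_0|r_h|_h^2$, combined with a Cl\'ement/Scott--Zhang interpolant of a divergence right-inverse $v_r$ scaled by a small parameter $\rho$, with the nonconformity error landing in $\|r_h\|_0\,|r_h|_h$ and absorbed by Young's inequality --- is precisely the classical Franca--Stenberg/Brezzi--Pitk\"aranta stability proof reproduced there. The only detail worth stating explicitly is that the quasi-interpolation operator must preserve the homogeneous boundary condition so that $I_h v_r\in V_h\subset H^1_0(\Omega)^d$, which is standard.
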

\begin{Rmk}
Although the conventional inf-sup condition~\cite{GR-1986},
\begin{align*}
\inf_{q_h\in Q_h}\sup_{v_h\in V_h} \fz{b(v_h, q_h)}{\|v_h\|_1 \|q_h\|_0} \ge \beta^\ast >0,
\end{align*}
does not hold true for the pair of $V_h$ and $Q_h$, the P1/P1 finite element spaces, $\mathcal{A}_h$ satisfies the stability inequality~\eqref{ieq:stability_Ah} for this pair.
\end{Rmk}
\begin{Prop}[\!\!\cite{BD-1988}]\label{Prop:StokesPrj}
(i)~Suppose $(w, r) \in (V \cap H^2(\Omega)^d)\times(Q \cap H^1(\Omega))$.
Then, there exists a positive constant $\alpha_{31}$ independent of $h$ such that for any $h$ the Stokes projection $(\hat{w}_h, \hat{r}_h)$ of $(w, r)$ by~\eqref{eq:StokesPrj} satisfies
\begin{subequations}
\begin{align}
\|\hat{w}_h - w\|_1,\ \ \|\hat{r}_h - r\|_0,\ \ |\hat{r}_h - r|_h \le \alpha_{31} h \|(w, r)\|_{H^2\times H^1}.
\label{ieq:StokesPrj}
\end{align}
(ii)~Suppose Hypothesis~\ref{hyp:L2} additionally holds.
Then, there exists a positive constant $\alpha_{32}$ independent of $h$ such that for any~$h$
\begin{align}
\|\hat{w}_h - w\|_0 \le \alpha_{32} h^2 \|(w,r)\|_{H^2\times H^1}.
\label{ieq:StokesPrj_L2}
\end{align}
\end{subequations}
\end{Prop}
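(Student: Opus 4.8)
The plan is to treat (i) by the standard inf--sup (Babu\v{s}ka--Brezzi) framework adapted to the stabilized form $\mathcal{A}_h$, and (ii) by an Aubin--Nitsche duality argument, in both cases carefully tracking the consistency defect produced by the stabilization term $\mathcal{C}_h$. Throughout I write $e := \hat{w}_h - w$ and $e_r := \hat{r}_h - r$.

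For (i), I would first derive the perturbed Galerkin orthogonality. Subtracting $\mathcal{A}_h((w,r),(v_h,q_h))$ from both sides of the defining relation~\eqref{eq:StokesPrj}, and observing from~\eqref{def:Ah} that $\mathcal{A} - \mathcal{A}_h = \mathcal{C}_h$ acts only on the pressure slot, one obtains $\mathcal{A}_h((e,e_r),(v_h,q_h)) = \mathcal{C}_h(r,q_h)$ for all $(v_h,q_h)\in V_h\times Q_h$. Next I choose interpolants $(\Pi_h w, J_h r)\in V_h\times Q_h$: the Lagrange interpolant for $w$, controlled by~\eqref{ieq:interpolation_1_first_order}, and a Cl\'ement/Scott--Zhang interpolant for $r$, corrected by its mean so that it lies in $Q_h = M_h\cap Q$, satisfying $\|r - J_h r\|_0 + |r - J_h r|_h \le ch\|r\|_1$. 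Writing $(\xi_h,\zeta_h) := (\hat{w}_h - \Pi_h w, \hat{r}_h - J_h r)\in V_h\times Q_h$ and invoking the discrete stability of $\mathcal{A}_h$ (Lemma~\ref{lem:stability_Ah}), I pick a test pair of unit $V\times Q$ norm so that $\alpha_{30}\|(\xi_h,\zeta_h)\|_{V\times Q}\le \mathcal{A}_h((\xi_h,\zeta_h),(v_h,q_h))$. By bilinearity the right-hand side splits into the orthogonality defect $\mathcal{C}_h(r,q_h)$ and the interpolation part $\mathcal{A}_h((w - \Pi_h w, r - J_h r),(v_h,q_h))$. Bounding the two $\mathcal{C}_h$-contributions through $|r|_h \le h\|r\|_1$ and the inverse inequality $|q_h|_h \le \alpha_{20}\|q_h\|_0$ from~\eqref{ieq:q_h}, and the remaining terms by the continuity of $\mathcal{A}_h$ and the interpolation estimates, every term is $O(h)\|(w,r)\|_{H^2\times H^1}$. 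A triangle inequality then delivers the $\|\cdot\|_1$ and $\|\cdot\|_0$ bounds; for $|\hat{r}_h - r|_h$ I would split off $|\zeta_h|_h \le \alpha_{20}\|\zeta_h\|_0$ and add $|r - J_h r|_h$.

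For (ii), I would run a duality argument. Since $\mathcal{A}$ is symmetric, Hypothesis~\ref{hyp:L2} applied with data $g = e \in L^2(\Omega)^d$ furnishes $(\phi,\psi)\in H^2(\Omega)^d\times H^1(\Omega)$ with $\mathcal{A}((\phi,\psi),(v,q)) = (e,v)$ for all $(v,q)\in V\times Q$ and $\|\phi\|_2 + \|\psi\|_1 \le c_R\|e\|_0$. Testing with $(v,q) = (e,e_r)$ gives $\|e\|_0^2 = \mathcal{A}((\phi,\psi),(e,e_r))$. Let $(\phi_h,\psi_h)$ be the stabilized Stokes projection of $(\phi,\psi)$; part (i) applied to $(\phi,\psi)$, together with $|\psi|_h \le h\|\psi\|_1$, yields $\|\phi-\phi_h\|_1 + \|\psi-\psi_h\|_0 + |\psi_h|_h \le ch\,c_R\|e\|_0$. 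I split $\mathcal{A}((\phi,\psi),(e,e_r))$ into the approximation part $\mathcal{A}((\phi-\phi_h,\psi-\psi_h),(e,e_r))$ --- bounded by the continuity of $\mathcal{A}$ times the part-(i) estimate $\|e\|_1 + \|e_r\|_0 \le ch\|(w,r)\|_{H^2\times H^1}$ --- and, using symmetry of $\mathcal{A}$ followed by the orthogonality relation of part (i) in the form $\mathcal{A}_h((e,e_r),(\phi_h,\psi_h)) = \mathcal{C}_h(r,\psi_h)$, the remaining part collapses to $\mathcal{C}_h(r,\psi_h) + \mathcal{C}_h(e_r,\psi_h) = \mathcal{C}_h(\hat{r}_h,\psi_h)$. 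Bounding the latter by $\delta_0|\hat{r}_h|_h|\psi_h|_h$ with $|\hat{r}_h|_h \le ch\|(w,r)\|_{H^2\times H^1}$ and $|\psi_h|_h \le ch\,c_R\|e\|_0$, both contributions are $O(h^2)\|(w,r)\|_{H^2\times H^1}\|e\|_0$; dividing by $\|e\|_0$ concludes.

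The main obstacle I anticipate is the stabilization consistency term, which destroys exact Galerkin orthogonality: the argument succeeds only because the $h_K^2$ weighting in $\mathcal{C}_h$ makes $|r|_h = O(h\|r\|_1)$ and because on discrete pressures $|\cdot|_h$ is controlled by $\|\cdot\|_0$ via~\eqref{ieq:q_h}. The delicate point in (ii) is verifying that both factors of the residual $\mathcal{C}_h(\hat{r}_h,\psi_h)$ carry a factor $h$, so that the stabilization does not degrade the duality gain below $h^2$; this requires the bound $|\psi_h|_h = O(h)$, which follows from the $H^1$-regularity of the dual pressure $\psi$ guaranteed by Hypothesis~\ref{hyp:L2} combined with part (i).
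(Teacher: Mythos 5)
Your proof is correct: the perturbed Galerkin orthogonality $\mathcal{A}_h\bigl((\hat{w}_h-w,\hat{r}_h-r),(v_h,q_h)\bigr)=\mathcal{C}_h(r,q_h)$, the stability of Lemma~\ref{lem:stability_Ah} combined with Lagrange/Cl\'ement interpolation and the weighted bounds $|r|_h\le h\|r\|_1$, $|q_h|_h\le\alpha_{20}\|q_h\|_0$ for~(i), and the Aubin--Nitsche duality in which the stabilization residual collapses to $\mathcal{C}_h(\hat{r}_h,\psi_h)=O(h^2)\|(w,r)\|_{H^2\times H^1}\|\hat{w}_h-w\|_0$ for~(ii), constitute exactly the standard analysis of the stabilized Stokes projection. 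The paper itself offers no proof---it quotes the result from~\cite{BD-1988}---and your argument is essentially the one in that reference, so there is nothing to reconcile.
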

We recall some results concerning the evaluation of composite functions, which are mainly due to Lemma~4.5 in~\cite{AG-2000} and Lemma~1 in~\cite{DR-1982}.
In the next lemma $a$ and $b$ are any functions in $W^{1,\infty}_0(\Omega)^d$ satisfying
\begin{align*}
\Delta t \|a\|_{1,\infty},\ \Delta t \|b\|_{1,\infty} \le \delta_1,
\end{align*}
where $\delta_1$ is a constant stated in (i) of the following lemma.
We consider the mappings $X_1(a,\Delta t)$ and $X_1(b,\Delta t)$ defined in~\eqref{def:X1}.
\begin{Lem}\label{lem:v-vX}
(i)~There exists a constant $\delta_1 \in (0,1)$ such that
\begin{align}
J(x) \ge 1/2,\quad \forall x\in\Omega,
\label{ieq:Jacobian}
\end{align}
where $J$ is the Jacobian $\det (\pz X_1(a,\Delta t)/\pz x)$.
\medskip\\
(ii)~There exist positive constants $\alpha_{4i}$, $i=0,\cdots,3$, independent of $\Delta t$ such that the following inequalities hold.
\begin{subequations}
\begin{align}
&&&&\|g-g\circ X_1(a,\Delta t)\|_0 & \le \alpha_{40} \Delta t \|a\|_{0,\infty} \|g\|_1,& \forall g & \in H^1(\Omega)^d,&&
\label{ieq:v-vX_1}\\
&&&&\|g-g\circ X_1(a,\Delta t) \|_{-1} & \le \alpha_{41} \Delta t \|a\|_{1,\infty} \|g\|_0,& \forall g &\in L^2(\Omega)^d,&&
\label{ieq:v-vX_2}\\
&&&&\|g\circ X_1(b,\Delta t)-g\circ X_1(a,\Delta t)\|_0 & \le \alpha_{42} \Delta t \|b-a\|_0 \|g\|_{1,\infty},& \forall g & \in W^{1,\infty}(\Omega)^d,&&
\label{ieq:v-vX_3}\\
&&&&\|g\circ X_1(b,\Delta t)-g\circ X_1(a,\Delta t)\|_{0,1} & \le \alpha_{43} \Delta t \|b-a\|_0 \|g\|_1,& \forall g & \in H^1(\Omega)^d.&&
\label{ieq:v-vX_4}
\end{align}
\end{subequations}
\end{Lem}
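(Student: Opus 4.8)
The plan is to establish (i) first and then use it as the workhorse, via a change of variables, for the four estimates in (ii): I would handle (a), (c), (d) by elementary path-integral arguments and reserve the $H^{-1}$ bound (b) for a duality argument, which I expect to be the crux.

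For (i), write $\partial X_1(a,\Delta t)/\partial x = I - \Delta t\,\nabla a$, so that $J(x) = \det(I - \Delta t\,\nabla a(x))$. Since every entry of $\Delta t\,\nabla a$ is bounded in modulus by $\Delta t\|a\|_{1,\infty}\le\delta_1$, and the determinant is a polynomial in these entries with $\det I = 1$, continuity of $\det$ at the identity yields a threshold $\delta_1\in(0,1)$, depending only on $d$, for which $|J(x)-1|\le 1/2$ uniformly in $x$; hence $J(x)\ge 1/2$. Two consequences will be used repeatedly: under $\Delta t\|a\|_{1,\infty}\le\delta_1$ the map $X_1(a,\Delta t)$ is a bi-Lipschitz bijection of $\Omega$ onto itself (Proposition~\ref{prop:X_in_omega}), and the change of variables $y=X_1(a,\Delta t)(x)$ together with $1/J\le 2$ gives $\|g\circ X_1(a,\Delta t)\|_0\le\sqrt{2}\,\|g\|_0$ for all $g\in L^2(\Omega)^d$. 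The same bounds hold with $a$ replaced by any convex combination $(1-s)a+sb$, whose $W^{1,\infty}$-norm is still $\le\delta_1/\Delta t$.

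For (a), (c) and (d) I would integrate $g$ along straight segments. For (a), parametrising $y(s)=x-s\,a(x)\Delta t$ gives $g(x)-g(X_1(a,\Delta t)(x))=\Delta t\int_0^1\nabla g(y(s))\cdot a(x)\,ds$; taking $L^2$-norms, pulling out $\|a\|_{0,\infty}$, and controlling $\|\nabla g(y(s))\|_0$ by $\sqrt2\,\|g\|_1$ through the change of variables $x\mapsto y(s)$ yields (a). Estimate (c) is the easiest: the two feet $X_1(a,\Delta t)(x)$ and $X_1(b,\Delta t)(x)$ differ by $(b-a)(x)\Delta t$, so the pointwise Lipschitz bound $|g\circ X_1(b,\Delta t)-g\circ X_1(a,\Delta t)|(x)\le\Delta t\,\|g\|_{1,\infty}\,|(b-a)(x)|$ holds, and integrating its square gives (c). Estimate (d) is the $L^1$ analogue when $g$ is only $H^1$: connecting the two feet by $z(s)=x-[(1-s)a(x)+sb(x)]\Delta t$ gives $g\circ X_1(b,\Delta t)-g\circ X_1(a,\Delta t)=-\Delta t\int_0^1\nabla g(z(s))\cdot(b-a)(x)\,ds$; integrating the modulus over $\Omega$, applying Cauchy--Schwarz in $x$ to pair $\nabla g(z(s))\in L^2$ with $(b-a)\in L^2$, and again bounding $\|\nabla g(z(s))\|_0\le\sqrt2\,\|g\|_1$ by the change of variables $x\mapsto z(s)$ (legitimate since $(1-s)a+sb$ satisfies the smallness hypothesis) produces (d).

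The hard part is (b), where $g\in L^2$ only, so no derivative of $g$ is available and a duality argument is forced. Writing $\|g-g\circ X_1(a,\Delta t)\|_{-1}=\sup\{(g-g\circ X_1(a,\Delta t),\phi):\phi\in H^1_0(\Omega)^d,\ \|\phi\|_1=1\}$, I would move the composition onto $\phi$ by the change of variables $y=X_1(a,\Delta t)(x)$, with inverse $Y\equiv X_1(a,\Delta t)^{-1}$ (well defined by (i) and Proposition~\ref{prop:X_in_omega}) and Jacobian $J_Y=1/J$, obtaining $(g-g\circ X_1(a,\Delta t),\phi)=(g,\ \phi-(\phi\circ Y)\,J_Y)$, so that it suffices to show $\|\phi-(\phi\circ Y)\,J_Y\|_0\le c\,\Delta t\,\|a\|_{1,\infty}\,\|\phi\|_1$. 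I would split this as $[\phi-\phi\circ Y]+(\phi\circ Y)[1-J_Y]$: the first term is a translation-type increment with $|Y(y)-y|=\Delta t\,|a(Y(y))|\le\Delta t\|a\|_{0,\infty}$, estimated as in (a); for the second, $1-J=\Delta t\,\mathrm{tr}(\nabla a)+O((\Delta t\|a\|_{1,\infty})^2)$ together with $J\ge1/2$ gives $|1-J_Y|\le c\,\Delta t\|a\|_{1,\infty}$, while $\|\phi\circ Y\|_0\le c\|\phi\|_0$. The delicate points---establishing the regularity and the Jacobian control of the inverse map $Y$, and verifying that the lower-order remainder in the determinant expansion is genuinely $O((\Delta t\|a\|_{1,\infty})^2)$ so that the full gradient factor $\|a\|_{1,\infty}$ (rather than $\|a\|_{0,\infty}$) is what survives---are exactly where the smallness constant $\delta_1$ and the $W^{1,\infty}$-norm of $a$ enter, and this is the estimate I expect to demand the most care.
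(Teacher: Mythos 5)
Your proposal is correct, but it is a self-contained rebuild of what the paper settles almost entirely by citation: the paper declares (i) obvious from $J_{ij}=\delta_{ij}-\Delta t\,\pz a_i/\pz x_j$, obtains \eqref{ieq:v-vX_1}, \eqref{ieq:v-vX_3} and \eqref{ieq:v-vX_4} as three specializations of the single H\"older-type composite-function inequality of Lemma~4.5 in \cite{AG-2000},
\begin{align*}
\| g\circ X_1(b,\Delta t) - g \circ X_1(a,\Delta t) \|_{0,q} \le 2 \| X_1(b,\Delta t) - X_1(a,\Delta t) \|_{0,pq} \|\nabla g\|_{0,qp^\prime},
\qquad \tfrac{1}{p}+\tfrac{1}{p^\prime}=1,
\end{align*}
namely $(q,p,p^\prime)=(2,\infty,1)$ with $b=0$, $(2,1,\infty)$, and $(1,2,2)$, using $X_1(b,\Delta t)-X_1(a,\Delta t)=\Delta t\,(a-b)$, and refers \eqref{ieq:v-vX_2} to Lemma~1 of \cite{DR-1982}. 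Your segment-integral arguments for (a), (c), (d) and your duality/change-of-variables argument for (b) are in substance the proofs of those two cited results, so the mathematics agrees; what your version buys is transparency --- it makes explicit where $\delta_1$, Proposition~\ref{prop:X_in_omega} (the feet and the connecting segments remain in $\Omega$, and $X_1$ is a bijection, so the change of variables is legitimate) and the Jacobian bound (i) actually enter --- while the paper's route is shorter and produces all three $L^q$ estimates uniformly from one parametrized inequality. Two spots in your sketch need a line of tightening rather than repair: in (c), the pointwise Lipschitz bound presupposes that the segment joining the two feet lies in $\Omega$, which is not automatic for nonconvex $\Omega$; your own observation that $z(s)=X_1\bigl((1-s)a+sb,\Delta t\bigr)(x)$ with $(1-s)a+sb$ satisfying the smallness condition supplies exactly this, so it should be invoked there as well. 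And in (b), the bound $\|\phi-\phi\circ Y\|_0\le c\,\Delta t\,\|a\|_{0,\infty}\|\phi\|_1$ is not literally ``as in (a)'', since (a) concerns $X_1$ rather than its inverse $Y$; transfer it by one more change of variables, $\|\phi-\phi\circ Y\|_0^2=\int_\Omega |\phi\circ X_1(a,\Delta t)-\phi|^2 J\,dx\le c\,\|\phi\circ X_1(a,\Delta t)-\phi\|_0^2$ (the Jacobian $J$ is bounded above by a constant depending only on $d$), and then apply (a).
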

\begin{proof}
Since $J_{ij}=\delta_{ij}-\Delta t\pz a_i/\pz x_j$, \eqref{ieq:Jacobian} is obvious.
It holds that for any $q\in [1,\infty)$, $p\in [1,\infty]$, $p^\prime$ with $1/p+1/p^\prime =1$ and $g\in W^{1,qp^\prime}(\Omega)^d$
\begin{align*}
\| g\circ X_1(b,\Delta t) - g \circ X_1(a,\Delta t) \|_{0,q} \le 2 \| X_1(b,\Delta t) - X_1(a,\Delta t) \|_{0,pq} \|\nabla g\|_{0,qp^\prime}
\end{align*}
from Lemma~4.5 in~\cite{AG-2000}, which implies~\eqref{ieq:v-vX_1}, \eqref{ieq:v-vX_3} and~\eqref{ieq:v-vX_4}.
For the proof of~\eqref{ieq:v-vX_2}, refer to Lemma~1 in~\cite{DR-1982}.
\end{proof}
%
%
%
\subsection{An estimate at each time step}
Let $(\hat{u}_h, \hat{p}_h)(t)\in V_h\times Q_h$ be the Stokes projection of $(u, p)(t)$ by~\eqref{eq:StokesPrj} for $t\in [0,T]$.
Letting
\begin{align*}
e_h^n\equiv u_h^n-\hat{u}_h^n,\quad \epsilon_h^n\equiv p_h^n-\hat{p}_h^n,\quad \eta(t)\equiv (u-\hat{u}_h)(t),
\end{align*}
we have for $n\ge 1$
\begin{align}
(\ol{D}_{\Delta t}e_h^n, v_h) + \mathcal{A}_h \bigl( (e_h^n,\epsilon_h^n), (v_h,q_h) \bigr) = \lA R_h^n, v_h\rA,\quad \forall (v_h, q_h)\in V_h\times Q_h,
\label{eq:e_epsilon_R}
\end{align}
where
\begin{align*}
R_h^n & \equiv \sum_{i=1}^4 R_{hi}^n, \\
R_{h1}^n & \equiv \fz{Du^n}{Dt} - \fz{u^n - u^{n-1} \circ X_1(u^{n-1},\Delta t)}{\Delta t},
& R_{h2}^n & \equiv \fz{1}{\Delta t}\Bigl\{ u^{n-1}\circ X_1(u_h^{n-1},\Delta t) - u^{n-1} \circ X_1(u^{n-1},\Delta t) \Bigr\}, \\
R_{h3}^n & \equiv \fz{1}{\Delta t}\Bigl\{ \eta^n - \eta^{n-1} \circ X_1(u_h^{n-1},\Delta t) \Bigr\},
& R_{h4}^n & \equiv -\fz{1}{\Delta t} \Bigl\{ e_h^{n-1} - e_h^{n-1} \circ X_1(u_h^{n-1},\Delta t) \Bigr\}.
\end{align*}
\eqref{eq:e_epsilon_R} is derived from~\eqref{scheme_NS}, \eqref{eq:StokesPrj} and~\eqref{eq:NS_weak}.
We note $e_h^0 = u_h^0-\hat{u}_h^0$ and set $\epsilon_h^0\equiv p_h^0-\hat{p}_h^0$, where $(u_h^0,p_h^0)$ is the Stokes projection of $(u^0,0)$ by \eqref{eq:StokesPrj}.
\par
Hereafter, let $\delta_1$ be the constant in Lemma~\ref{lem:v-vX}.
\begin{Prop}\label{prop:eh_epsh_Gronwall}
(i)~Let $(u^0, p^0)\in (H^2(\Omega)^d\cap V)\times (H^1(\Omega)\cap Q)$ be given and assume $\nabla\cdot u^0 =0$.
Then, there exists a positive constant $c_I$ independent of $h$ such that for any $h$
\begin{align}
\sqrt{\nu}\|D(e_h^0)\|_0+\sqrt{\fz{\delta_0}{2}} |\epsilon_h^0|_h \le c_I h.
\label{ieq:eh0_epsh0}
\end{align}
(ii)~Let $n\in \{1,\cdots,N_T\}$ be a fixed number and $u_h^{n-1} \in V_h$ be known.
Suppose the inequality
\begin{align}
\Delta t \|u_h^{n-1}\|_{1,\infty} \le \delta_1
\label{ieq:dt_uh_1inf}
\end{align}
holds.
Then, there exists a unique solution~$(u_h^n,p_h^n)\in V_h\times Q_h$ of~\eqref{scheme_NS}.
\medskip\\
(iii)~Furthermore, suppose Hypothesis~\ref{hyp:regularity} and the inequality
\begin{align}
\Delta t \|u\|_{C(W^{1,\infty})} \le \delta_1
\label{ieq:dt_u_1inf}
\end{align}
hold.
Let $p_h^{n-1}\in Q_h$ be known and suppose the equation
\begin{align}
b(u_h^{n-1},q_h)-\mathcal{C}_h(p_h^{n-1},q_h) = 0,\quad \forall q_h\in Q_h,\label{eq:b_Ch_n-1}
\end{align}
holds.
Then, it holds that
\begin{align}
&\ol{D}_{\Delta t} \Bigl( \nu\|D(e_h^n)\|_0^2 +\fz{\delta_0}{2}|\epsilon_h^n|_h^2 \Bigr) + \fz{1}{2}\|\ol{D}_{\Delta t}e_h^n\|_0^2 
\le A_1(\|u_h^{n-1}\|_{0,\infty}) \nu\|D(e_h^{n-1})\|_0^2 \notag\\
&\qquad
+ A_2(\|u_h^{n-1}\|_{0,\infty}) \Bigl\{ \Delta t\|u\|_{Z^2(t^{n-1},t^n)}^2 
+ h^2 \Bigl( \fz{1}{\Delta t} \|(u,p)\|_{H^1(t^{n-1},t^n; H^2\times H^1)}^2 +1 \Bigr) \Bigr\},
\label{ieq:eh_epsh_Gronwall}
\end{align}
where $A_i$, $i=1,2$, are functions defined by
\begin{align*}
A_i(\xi)\equiv c_i (\xi^2 +1 )
\end{align*}
and $c_i$, $i=1, 2$, are positive constants independent of $h$ and $\Delta t$.
They are defined by~\eqref{def:c1_c2} below.
\end{Prop}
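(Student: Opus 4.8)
The three parts are of increasing depth, so I would dispatch (i) and (ii) quickly and spend the effort on (iii). For (i), I would exploit the linearity of the Stokes projection \eqref{eq:StokesPrj}: since $(u_h^0,p_h^0)$ is the projection of $(u^0,0)$ and $(\hat u_h^0,\hat p_h^0)$ is the projection of $(u^0,p^0)$, the pair $(e_h^0,\epsilon_h^0)=(u_h^0-\hat u_h^0,p_h^0-\hat p_h^0)$ is exactly the Stokes projection of $(0,-p^0)$. Applying Proposition~\ref{Prop:StokesPrj}(i) with $(w,r)=(0,-p^0)$ gives $\|e_h^0\|_1\le\alpha_{31}h\|p^0\|_1$ and $|\epsilon_h^0+p^0|_h\le\alpha_{31}h\|p^0\|_1$; then Korn's inequality (Lemma~\ref{lem:Korn}) bounds $\|D(e_h^0)\|_0$ by $\|e_h^0\|_1$, and the triangle inequality together with $|p^0|_h\le h\|p^0\|_1$ absorbs the extra pressure term, yielding \eqref{ieq:eh0_epsh0} with $c_I$ depending on $\nu,\delta_0,\alpha_{31}$ and $\|p^0\|_1$. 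For (ii), the bound \eqref{ieq:dt_uh_1inf} with $\delta_1<1$ lets me invoke Proposition~\ref{prop:X_in_omega} (with $w=u_h^{n-1}\in W^{1,\infty}_0(\Omega)^d$) to conclude $X_1(u_h^{n-1},\Delta t)(\Omega)=\Omega$, so the composition in \eqref{scheme_NS} is well defined; existence and uniqueness then follow from the regularity of the (time-independent) system matrix established in Remark~\ref{rmk:scheme}(ii)--(iii).

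For (iii) the plan is a discrete energy estimate on the error equation \eqref{eq:e_epsilon_R} tested with $(v_h,q_h)=(\ol{D}_{\Delta t}e_h^n,\epsilon_h^n)$. First I record the discrete incompressibility relation $b(e_h^n,q_h)=\mathcal{C}_h(\epsilon_h^n,q_h)$ for all $q_h\in Q_h$, obtained by testing both \eqref{scheme_NS} and \eqref{eq:StokesPrj} with $(0,q_h)$ and using $\nabla\cdot u^n=0$. The same relation at level $n-1$ is, after subtracting the Stokes-projection identity, precisely hypothesis \eqref{eq:b_Ch_n-1}; subtracting the two levels gives $b(\ol{D}_{\Delta t}e_h^n,q_h)=\mathcal{C}_h(\ol{D}_{\Delta t}\epsilon_h^n,q_h)$. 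With these two identities the pressure contributions collapse: the term $b(e_h^n,\epsilon_h^n)-\mathcal{C}_h(\epsilon_h^n,\epsilon_h^n)$ vanishes, while the cross term $b(\ol{D}_{\Delta t}e_h^n,\epsilon_h^n)$ turns into $\mathcal{C}_h(\ol{D}_{\Delta t}\epsilon_h^n,\epsilon_h^n)$. Applying the algebraic identity $2(x,x-y)=|x|^2-|y|^2+|x-y|^2$ to $a(e_h^n,\ol{D}_{\Delta t}e_h^n)$ and to $\mathcal{C}_h(\ol{D}_{\Delta t}\epsilon_h^n,\epsilon_h^n)$ produces $\ol{D}_{\Delta t}(\nu\|D(e_h^n)\|_0^2)$ and $\ol{D}_{\Delta t}(\tfrac{\delta_0}{2}|\epsilon_h^n|_h^2)$ plus nonnegative remainders that I discard, and a Young inequality $\langle R_h^n,\ol{D}_{\Delta t}e_h^n\rangle\le\tfrac12\|R_h^n\|_0^2+\tfrac12\|\ol{D}_{\Delta t}e_h^n\|_0^2$ absorbs half of $\|\ol{D}_{\Delta t}e_h^n\|_0^2$. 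This leaves \eqref{ieq:eh_epsh_Gronwall} modulo an $L^2$ bound on $\|R_h^n\|_0^2$.

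The remaining work is to estimate $R_h^n=\sum_{i=1}^4 R_{hi}^n$. The consistency term $R_{h1}^n$ is the truncation error of the characteristic discretization and satisfies $\|R_{h1}^n\|_0^2\le c\,\Delta t\,\|u\|_{Z^2(t^{n-1},t^n)}^2$, which I would take from an appendix lemma. For $R_{h2}^n$ I use \eqref{ieq:v-vX_3} with $g=u^{n-1}$, and for $R_{h4}^n$ I use \eqref{ieq:v-vX_1} with $g=e_h^{n-1}$ and $a=u_h^{n-1}$; the latter is where the factor $\|u_h^{n-1}\|_{0,\infty}^2$ multiplying $\|D(e_h^{n-1})\|_0^2$ enters (after Korn), producing the $A_1$ coefficient. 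I split $R_{h3}^n$ into $(\eta^n-\eta^{n-1})/\Delta t$ and $(\eta^{n-1}-\eta^{n-1}\circ X_1(u_h^{n-1},\Delta t))/\Delta t$: the first is bounded by $\tfrac{h}{\sqrt{\Delta t}}\|(u,p)\|_{H^1(t^{n-1},t^n;H^2\times H^1)}$ using that $\partial_t\hat u_h$ is the velocity component of the Stokes projection of $\partial_t(u,p)$ together with \eqref{ieq:StokesPrj}, and the second by \eqref{ieq:v-vX_1} and \eqref{ieq:StokesPrj}. Collecting these bounds, routing the bounded factors $\|(u^{n-1},p^{n-1})\|_{H^2\times H^1}^2\le c_{(u,p)}$ into the ``$+1$'' of $A_2$, and noting $\|u^{n-1}\|_{1,\infty}\le\|u\|_{C(W^{1,\infty})}$, produces the right-hand side in the stated form.

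The main obstacle is the pressure term. Without the stabilization $\mathcal{C}_h$ the cross term $b(\ol{D}_{\Delta t}e_h^n,\epsilon_h^n)$ cannot be controlled, and it is only the discrete incompressibility at level $n-1$, i.e. hypothesis \eqref{eq:b_Ch_n-1}, that allows me to convert it into $\mathcal{C}_h(\ol{D}_{\Delta t}\epsilon_h^n,\epsilon_h^n)$ and hence into a clean backward difference of $|\epsilon_h^n|_h^2$. Getting this cancellation exactly right, while simultaneously tracking the $\|u_h^{n-1}\|_{0,\infty}$-dependence of $R_{h2}^n,R_{h3}^n,R_{h4}^n$ so that it lands only inside the coefficients $A_1$ and $A_2$ and never on the left-hand side, is the delicate part of the argument.
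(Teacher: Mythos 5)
Your proposal is correct and follows essentially the same route as the paper: an energy estimate on \eqref{eq:e_epsilon_R} with test function $\ol{D}_{\Delta t}e_h^n$, cancellation of the pressure cross term through the discrete divergence--stabilization relations at levels $n-1$ and $n$ (hypothesis \eqref{eq:b_Ch_n-1} together with the scheme and the Stokes projection identities), absorption of $\|\ol{D}_{\Delta t}e_h^n\|_0^2$ by Young's inequality, and the residual bounds of Lemma~\ref{lem:estimates_R}. Your minor variations---obtaining (i) from the linearity of the Stokes projection applied to $(0,-p^0)$ instead of two triangle inequalities, testing directly with $(\ol{D}_{\Delta t}e_h^n,\epsilon_h^n)$ rather than with $(\ol{D}_{\Delta t}e_h^n,0)$ plus a separately derived identity, and bounding $R_{h3}^n$ via the split $\ol{D}_{\Delta t}\eta^n+\fz{1}{\Delta t}\{\eta^{n-1}-\eta^{n-1}\circ X_1(u_h^{n-1},\Delta t)\}$ with the time-differentiated projection and \eqref{ieq:v-vX_1} instead of the characteristics integral representation---are all algebraically equivalent and sound.
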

\par
For the proof we use the next lemma, which is proved in Appendix~\ref{proof:lem_R}.
\begin{Lem}\label{lem:estimates_R}
Suppose Hypothesis~\ref{hyp:regularity} holds.
Let $n\in \{1,\cdots,N_T\}$ be a fixed number and $u_h^{n-1} \in V_h$ be known.
Then, under the conditions \eqref{ieq:dt_uh_1inf} and \eqref{ieq:dt_u_1inf} it holds that
\begin{subequations}
\begin{align}
\| R_{h1}^n \|_0 & \le c_u \sqrt{\Delta t} \|u\|_{Z^2(t^{n-1},t^n)}, \label{ieq:R1}\\
\| R_{h2}^n \|_0 & \le c_u \bigl( \|e_h^{n-1}\|_0 + h \|(u,p)^{n-1}\|_{H^2\times H^1} \bigr), \label{ieq:R2}\\
\| R_{h3}^n \|_0 & \le \fz{ch}{\sqrt{\Delta t}} (\|u_h^{n-1}\|_{0,\infty} + 1) \| (u,p) \|_{H^1(t^{n-1},t^n; H^2\times H^1)}, \label{ieq:R3}\\
\| R_{h4}^n \|_0 & \le c \|u_h^{n-1}\|_{0,\infty} \|e_h^{n-1}\|_1.\label{ieq:R4}
\end{align}
\end{subequations}
\end{Lem}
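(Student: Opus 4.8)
The plan is to estimate the four residuals separately, disposing of the two ``composition-difference'' terms $R_{h2}^n$ and $R_{h4}^n$ by direct appeal to Lemma~\ref{lem:v-vX}, and spending the real effort on the consistency term $R_{h1}^n$ and the transport–projection term $R_{h3}^n$. Throughout I would use that $\|u^{n-1}\|_{1,\infty}\le\|u\|_{C(W^{1,\infty})}$, so that conditions~\eqref{ieq:dt_uh_1inf} and~\eqref{ieq:dt_u_1inf} let me invoke Lemma~\ref{lem:v-vX} with $a=u_h^{n-1}$, $a=u^{n-1}$ or $b=u_h^{n-1}$ as needed, and that the prefactor $1/\Delta t$ in each $R_{hi}^n$ is exactly what cancels the $\Delta t$ that Lemma~\ref{lem:v-vX} produces.

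For $R_{h4}^n$ I would apply~\eqref{ieq:v-vX_1} with $g=e_h^{n-1}$, $a=u_h^{n-1}$ to get $\|e_h^{n-1}-e_h^{n-1}\circ X_1(u_h^{n-1},\Delta t)\|_0\le\alpha_{40}\Delta t\|u_h^{n-1}\|_{0,\infty}\|e_h^{n-1}\|_1$; dividing by $\Delta t$ gives~\eqref{ieq:R4} immediately. For $R_{h2}^n$ I would use~\eqref{ieq:v-vX_3} with $g=u^{n-1}$, $a=u^{n-1}$, $b=u_h^{n-1}$, obtaining $\|R_{h2}^n\|_0\le\alpha_{42}\|u^{n-1}\|_{1,\infty}\|u_h^{n-1}-u^{n-1}\|_0$; writing $u_h^{n-1}-u^{n-1}=e_h^{n-1}-\eta^{n-1}$ and bounding $\|\eta^{n-1}\|_0\le\|\eta^{n-1}\|_1\le\alpha_{31}h\|(u,p)^{n-1}\|_{H^2\times H^1}$ via the $H^1$ Stokes-projection estimate~\eqref{ieq:StokesPrj} yields~\eqref{ieq:R2}.

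For $R_{h3}^n$ I would split $\eta^n-\eta^{n-1}\circ X_1(u_h^{n-1},\Delta t)=(\eta^n-\eta^{n-1})+(\eta^{n-1}-\eta^{n-1}\circ X_1(u_h^{n-1},\Delta t))$. The transport part is controlled by~\eqref{ieq:v-vX_1} by $\alpha_{40}\Delta t\|u_h^{n-1}\|_{0,\infty}\|\eta^{n-1}\|_1$ with $\|\eta^{n-1}\|_1\le\alpha_{31}h\|(u,p)^{n-1}\|_{H^2\times H^1}$. For the time-increment part I would use that the bilinear forms in~\eqref{eq:StokesPrj} are independent of $t$, so the Stokes projection commutes with $\partial_t$ and $\partial_t\eta$ is itself the projection error of $(\partial_t u,\partial_t p)$; then $\|\eta^n-\eta^{n-1}\|_0\le\int_{t^{n-1}}^{t^n}\|\partial_t\eta\|_0\,dt\le\alpha_{31}h\sqrt{\Delta t}\,\|(u,p)\|_{H^1(t^{n-1},t^n;H^2\times H^1)}$. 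After dividing by $\Delta t$ this part already carries the factor $h/\sqrt{\Delta t}$; to put the transport part in the same form I would pass from the pointwise-in-time value $\|(u,p)^{n-1}\|_{H^2\times H^1}$ to the interval norm by the trace-in-time bound $\|v(t^{n-1})\|_X^2\le(2/\Delta t)\|v\|_{L^2(t^{n-1},t^n;X)}^2+2\Delta t\|\partial_t v\|_{L^2(t^{n-1},t^n;X)}^2$, which supplies the missing $1/\sqrt{\Delta t}$ and produces~\eqref{ieq:R3} with the coefficient $(\|u_h^{n-1}\|_{0,\infty}+1)$.

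The hard part is $R_{h1}^n$, the genuine truncation error of the characteristics method. The plan is to Taylor-expand along the straight approximate trajectory $\tilde X(s)(x)\equiv x-(t^n-s)u^{n-1}(x)$, which satisfies $\tilde X(t^n)(x)=x$ and $\tilde X(t^{n-1})(x)=X_1(u^{n-1},\Delta t)(x)$, so that $\psi(s)\equiv u(\tilde X(s),s)$ has $\psi(t^n)=u^n$ and $\psi(t^{n-1})=u^{n-1}\circ X_1(u^{n-1},\Delta t)$. A second-order expansion with integral remainder gives $\frac{u^n-u^{n-1}\circ X_1(u^{n-1},\Delta t)}{\Delta t}=\psi'(t^n)-\frac{1}{\Delta t}\int_{t^{n-1}}^{t^n}(s-t^{n-1})\psi''(s)\,ds$, and since $\psi'(t^n)=\partial_t u^n+u^{n-1}\cdot\nabla u^n$ while $Du^n/Dt=\partial_t u^n+u^n\cdot\nabla u^n$, I obtain the clean identity $R_{h1}^n=(u^n-u^{n-1})\cdot\nabla u^n+\frac{1}{\Delta t}\int_{t^{n-1}}^{t^n}(s-t^{n-1})\psi''(s)\,ds$. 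The first term is bounded by $\|\nabla u^n\|_{0,\infty}\|u^n-u^{n-1}\|_0\le\|u\|_{C(W^{1,\infty})}\sqrt{\Delta t}\,\|u\|_{Z^2(t^{n-1},t^n)}$ using $\|u^n-u^{n-1}\|_0\le\sqrt{\Delta t}\,\|\partial_t u\|_{L^2(t^{n-1},t^n;L^2)}$. The remainder is the delicate piece: expanding $\psi''(s)=\partial_t^2 u+2(u^{n-1}\cdot\nabla)\partial_t u+(u^{n-1}\cdot\nabla)^2 u$ at $(\tilde X(s),s)$, the three groups of terms are precisely the $H^2(L^2)$, $H^1(H^1)$ and $L^2(H^2)$ constituents of $\|u\|_{Z^2(t^{n-1},t^n)}$; bounding each after the change of variables $y=\tilde X(s)(x)$ (whose Jacobian is $\ge1/2$ by Lemma~\ref{lem:v-vX}(i), since $\tilde X(s)=X_1(u^{n-1},t^n-s)$ and $t^n-s\le\Delta t$ make~\eqref{ieq:dt_u_1inf} applicable) and using $|s-t^{n-1}|\le\Delta t$ to absorb the weight against the $1/\Delta t$, one reaches $c_u\sqrt{\Delta t}\,\|u\|_{Z^2(t^{n-1},t^n)}$ and hence~\eqref{ieq:R1}. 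The main obstacles are the careful bookkeeping of the $\psi''$ terms into the three $Z^2$-norm components and the verification that the advecting map $\tilde X(s)$ remains a nondegenerate bijection of $\Omega$ for all $s\in[t^{n-1},t^n]$.
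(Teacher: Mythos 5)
Your proposal is correct, and for three of the four terms it coincides with the paper's own proof: the bounds for $R_{h2}^n$ and $R_{h4}^n$ via \eqref{ieq:v-vX_3} and \eqref{ieq:v-vX_1} together with the Stokes-projection estimate \eqref{ieq:StokesPrj} are exactly the paper's, and your treatment of $R_{h1}^n$ --- second-order Taylor expansion of $\psi(s)=u(\tilde X(s),s)$ along the straight approximate trajectory, splitting off $(u^n-u^{n-1})\cdot\nabla u^n$, and bounding the $\psi''$ remainder by the three $Z^2$ components after a change of variables with Jacobian $\ge 1/2$ --- is the paper's argument in a different parametrization (the paper writes $y(x,s)=x-(1-s)u^{n-1}(x)\Delta t$, $s\in[0,1]$, and arrives at the same two pieces $R^n_{h11}$, $R^n_{h12}$). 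The only genuine difference is $R_{h3}^n$. The paper does not split it: it writes $\eta^n-\eta^{n-1}\circ X_1(u_h^{n-1},\Delta t)$ in one shot as $\Delta t\int_0^1\{(\partial_t+u_h^{n-1}\cdot\nabla)\eta\}(y(\cdot,s),t(s))\,ds$ along the trajectory generated by $u_h^{n-1}$, changes variables, and lands directly on the interval norm $\|(u,p)\|_{H^1(t^{n-1},t^n;H^2\times H^1)}$ with the factor $(\|u_h^{n-1}\|_{0,\infty}+1)/\sqrt{\Delta t}$. You instead decompose into a time-increment part and a pure transport part, bound the latter by the pointwise-in-time quantity $h\|u_h^{n-1}\|_{0,\infty}\|(u,p)^{n-1}\|_{H^2\times H^1}$, and then need an extra trace-in-time inequality to convert $\|(u,p)^{n-1}\|_{H^2\times H^1}$ into $\Delta t^{-1/2}\|(u,p)\|_{H^1(t^{n-1},t^n;H^2\times H^1)}$. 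Both routes are valid and both rely on the same two ingredients (commutation of the time-independent Stokes projection with $\partial_t$, and \eqref{ieq:StokesPrj} applied to $(\partial_t u,\partial_t p)$); the paper's version is slightly more economical because the integral along the trajectory produces interval norms from the start, while yours trades that for an elementary Sobolev-in-time embedding, at no loss in the final constant's structure.
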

\begin{proof}[Proof of Proposition~\ref{prop:eh_epsh_Gronwall}]
We prove~(i).
Since $(u_h^0, p_h^0)$ and $(\hat{u}_h^0, \hat{p}_h^0)$ are the Stokes projections of $(u^0, 0)$ and $(u^0, p^0)$ by~\eqref{eq:StokesPrj}, respectively, we have
\begin{align*}
\|D(e_h^0)\|_0 & \le \|e_h^0\|_1 = \|u_h^0 - \hat{u}_h^0\|_1 \le \|u_h^0 - u^0\|_1 +\| u^0 - \hat{u}_h^0\|_1 \le 2\alpha_{31} h \|(u^0,p^0)\|_{H^2\times H^1}, \\
|\epsilon_h^0|_h & = |p_h^0-\hat{p}_h^0|_h \le | p_h^0 - 0 |_h + |\hat{p}_h^0-p^0|_h + |p^0|_h \le \alpha_{20} \bigl( \| p_h^0 - 0 \|_0 + \|\hat{p}_h^0 - p^0\|_0 \bigr) + h\|p^0\|_1 \\
& \le (2\alpha_{20}\alpha_{31} +1) h \| (u^0,p^0) \|_{H^2\times H^1},
\end{align*}
which imply~\eqref{ieq:eh0_epsh0} for $c_I \equiv \{ 2\sqrt{\nu}\alpha_{31}+\sqrt{\delta_0/2} (2\alpha_{20}\alpha_{31}+1) \} \|(u^0,p^0)\|_{H^2\times H^1}$.
\par
(ii) is obtained from~\eqref{ieq:dt_uh_1inf} and Remark~\ref{rmk:scheme}-(iii).
\par
We prove~(iii).
Substituting $(\ol{D}_{\Delta t}e_h^n, 0)$ into $(v_h, q_h)$ in~\eqref{eq:e_epsilon_R} and using the inequality~$(a^2 - b^2)/2 \le a (a - b)$, we have
\begin{align}
& \|\ol{D}_{\Delta t} e_h^n\|_0^2 + \ol{D}_{\Delta t} \bigl( \nu\|D(e_h^n)\|_0^2 \bigr) + b(\ol{D}_{\Delta t} e_h^n, \epsilon_h^n) \le \sum_{i=1}^4 \lA R_{hi}^n, \ol{D}_{\Delta t}e_h^n\rA,
\label{proof_proposition_1}
\end{align}
where it is noted that $X_1(u^{n-1},\Delta t)$ in $R_{hi}^n~(i=1, 2)$ maps $\Omega$ onto $\Omega$ by~\eqref{ieq:dt_u_1inf}.
From~\eqref{eq:b_Ch_n-1} and~\eqref{scheme_NS} with $v_h=0\in V_h$ it holds that
\begin{align}
b(u_h^k, q_h) - \mathcal{C}_h(p_h^k, q_h) = 0,\quad \forall q_h\in Q_h,\label{eq:uh_ph_n-1_n}
\end{align}
for $k=n-1$ and $n$.
Since $(\hat{u}_h^n, \hat{p}_h^n)$ is the Stokes projection of $(u^n, p^n)$ by~\eqref{eq:StokesPrj}, we have
\begin{align}
b(\hat{u}_h^k, q_h) - \mathcal{C}_h(\hat{p}_h^k, q_h) = b(u^k, q_h) =0,\quad \forall q_h\in Q_h,\label{eq:hat_uh_hat_ph_n-1_n}
\end{align}
for $k=n-1$ and $n$.
\eqref{eq:uh_ph_n-1_n} and~\eqref{eq:hat_uh_hat_ph_n-1_n} imply
\begin{align*}
b(\ol{D}_{\Delta t} e_h^n, q_h) - \mathcal{C}_h(\ol{D}_{\Delta t} \epsilon_h^n, q_h) = 0,\quad \forall q_h\in Q_h,
\end{align*}
which leads to
\begin{align}
-b(\ol{D}_{\Delta t} e_h^n, \epsilon_h^n) + \mathcal{C}_h(\ol{D}_{\Delta t} \epsilon_h^n, \epsilon_h^n) = 0
\label{proof_proposition_2}
\end{align}
by putting $q_h=-\epsilon_h^n\in Q_h$.
Adding~\eqref{proof_proposition_2} to~\eqref{proof_proposition_1} and using Lemma~\ref{lem:estimates_R} and the inequality $ab\le \beta a^2/2+b^2/(2\beta)\ (\beta >0)$,
we have
\begin{align}
&\|\ol{D}_{\Delta t}e_h^n\|_0^2 + \ol{D}_{\Delta t}\Bigl( \nu\|D(e_h^n)\|_0^2 +\fz{\delta_0}{2}|\epsilon_h^n|_h^2 \Bigr) 
\le \sum_{i=1}^4\lA R_{hi}^n, \ol{D}_{\Delta t}e_h^n\rA \notag\\
&\quad \le \Bigl(\sum_{i=1}^4\beta_i\Bigr) \|\ol{D}_{\Delta t}e_h^n\|_0^2
+ \fz{c_u\alpha_1^2}{\nu}\biggl( \fz{1}{\beta_2} + \fz{\|u_h^{n-1}\|_{0,\infty}^2}{\beta_4} \biggr) \nu\|D(e_h^{n-1})\|_0^2 \notag\\
&\qquad + c_u^\prime \biggl\{ \fz{\Delta t}{\beta_1}\|u\|_{Z^2(t^{n-1},t^n)}^2 + h^2 \biggl( \fz{1}{\beta_2}\|(u,p)\|_{C(H^2\times H^1)}^2 + \fz{\|u_h^{n-1}\|_{0,\infty}^2+1}{\beta_3\Delta t} \|(u,p)\|_{H^1(t^{n-1},t^n; H^2\times H^1)}^2 \biggr) \biggr\}
\label{proof_proposition_Gronwall}
\end{align}
for any positive numbers $\beta_i\ (i=1,\cdots,4)$, where the inequality~$\|e_h^{n-1}\|_0 \le \|e_h^{n-1}\|_1$ has been used.
By setting $\beta_i=1/8$ for $i=1,\cdots,4$ in~\eqref{proof_proposition_Gronwall} it holds that 
\begin{align*}
&\ol{D}_{\Delta t}\Bigl( \nu\|D(e_h^n)\|_0^2 +\fz{\delta_0}{2}|\epsilon_h^n|_h^2 \Bigr) + \fz{1}{2}\|\ol{D}_{\Delta t}e_h^n\|_0^2 
\le \fz{c_u}{\nu} \bigl( \|u_h^{n-1}\|_{0,\infty}^2 + 1 \bigr) \nu\|D(e_h^{n-1})\|_0^2 \\
&\qquad + c_{(u,p)} \Bigl\{ \Delta t\|u\|_{Z^2(t^{n-1},t^n)}^2 + h^2 \bigl( \|u_h^{n-1}\|_{0,\infty}^2+1\bigr) \Bigl( \fz{1}{\Delta t} \|(u,p)\|_{H^1(t^{n-1},t^n; H^2\times H^1)}^2 + 1 \Bigr) \Bigr\}.
\end{align*}
Putting
\begin{align}
c_1 \equiv c_u/\nu,
\qquad
c_2 \equiv c_{(u,p)},
\label{def:c1_c2}
\end{align}
we obtain~\eqref{ieq:eh_epsh_Gronwall}.
\end{proof}
%
%
%
%
%
%
%
\subsection{Proof of Theorem~\ref{thm:main_results}}
The proof is performed by induction through three steps.\medskip\\
\textit{Step~1} (Setting $c_0$ and $h_0$):\ 
Let $c_I$ and $A_i~(i=1,2)$ be the constant and the functions in Proposition~\ref{prop:eh_epsh_Gronwall}, respectively.
Let $a_1$, $a_2$ and $c_\ast$ be constants defined by
\begin{align*}
a_1 & \equiv A_1(\|u\|_{C(L^\infty)}+1),\quad a_2 \equiv A_2(\|u\|_{C(L^\infty)}+1), \\
c_\ast  & \equiv \fz{\alpha_1}{\sqrt{\nu}} \exp(a_1T/2) \max \Bigl\{ a_2^{1/2}\|u\|_{Z^2},  
a_2^{1/2} \bigl( \|(u,p)\|_{H^1(H^2\times H^1)} + T^{1/2} \bigr) + c_I\Bigr\}.
\end{align*}
We can choose sufficiently small positive constants $c_0$ and $h_0$ such that
\begin{subequations}\label{def:c0_h0}
\begin{align}
\alpha_{21} \Bigl\{ c_\ast (c_0h_0^{d/12}+h_0^{1-d/6}) + (\alpha_{24} + \alpha_{31}) h_0^{1-d/6}\|(u,p)\|_{C(H^2\times H^1)} \Bigr\} & \le 1, 
\label{def:c0_h0_Linf}\\
c_0 \Bigl[ \alpha_{22} \Bigl\{ c_\ast (c_0+h_0^{1-d/4}) + (\alpha_{24} + \alpha_{31}) h_0^{1-d/4}\|(u,p)\|_{C(H^2\times H^1)} \Bigr\} 
+ \alpha_{23} h_0^{d/4}\|u\|_{C(W^{1,\infty})} \Bigr] & \le \delta_1,
\label{def:c0_h0_W1inf}
\end{align}
\end{subequations}
since all the powers of $h_0$ are positive.
\medskip\\
\textit{Step~2} (Induction):\ 
For $n\in\{0,\cdots,N_T\}$ we set property P($n$),
\begin{align*}
\mbox{P($n$)}:
\left\{
\begin{aligned}
&
\!
\begin{aligned}
&
{\rm (a)}~\nu\|D(e_h^n)\|_0^2 +\fz{\delta_0}{2}|\epsilon_h^n|_h^2 + \fz{1}{2}\|\ol{D}_{\Delta t}e_h\|_{l^2_n(L^2)}^2 
\\ & \qquad
\le \exp (a_1 n\Delta t)
\Bigl[ \nu\|D(e_h^0)\|_0^2 +\fz{\delta_0}{2}|\epsilon_h^0|_h^2 + a_2\Bigl\{ \Delta t^2 \|u\|_{Z^2(0,t^n)}^2 
+ h^2 \bigl( \|(u,p)\|_{H^1(0,t^n; H^2\times H^1)}^2 + n\Delta t \bigr)
\Bigr\} \Bigr], 
\end{aligned}
\\
& {\rm (b)}~\|u_h^n\|_{0,\infty} \le \|u\|_{C(L^\infty)}+1,\\
& {\rm (c)}~\ \Delta t\|u_h^n\|_{1,\infty} \le \delta_1,
\end{aligned}
\right.
\end{align*}
where $\|\ol{D}_{\Delta t}e_h\|_{l^2_n(L^2)}$ vanishes for $n=0$.
P($n$)-(a) can be rewritten as
\begin{align}
x_n + \Delta t\sum_{i=1}^n y_i \le \exp( a_1n\Delta t ) \Bigl(x_0 + \Delta t \sum_{i=1}^n b_i \Bigr),
\label{ieq:proof_thm_P_n}
\end{align}
where
\begin{align*}
x_n & \equiv \nu\|D(e_h^n)\|_0^2 +\fz{\delta_0}{2}|\epsilon_h^n|_h^2,
&
y_i &\equiv \fz{1}{2}\|\ol{D}_{\Delta t}e_h^i\|_0^2,
&
b_i & \equiv a_2\Bigl\{ \Delta t\|u\|_{Z^2(t^{i-1},t^i)}^2 + h^2 \Bigl( \fz{1}{\Delta t} \|(u,p)\|_{H^1(t^{i-1},t^i; H^2\times H^1)}^2 +1 \Bigr) \Bigr\}.
\end{align*}
\par
We firstly prove the general step in the induction.
Supposing that P($n-1$) holds true for an integer $n\in\{1,\cdots,N_T\}$, we prove that P($n$) also does.
Since P($n-1$)-(c) is nothing but~\eqref{ieq:dt_uh_1inf}, there exists a unique solution $(u_h^n,p_h^n)\in V_h\times Q_h$ of equation~\eqref{scheme_NS} from Proposition~\ref{prop:eh_epsh_Gronwall}-(ii).
We prove P($n$)-(a).
\eqref{ieq:dt_u_1inf} holds from the estimate,
\begin{align*}
\Delta t \|u\|_{C(W^{1,\infty})} \le c_0h_0^{d/4} \|u\|_{C(W^{1,\infty})} \le c_0 \alpha_{23} h_0^{d/4} \|u\|_{C(W^{1,\infty})} \le \delta_1,
\end{align*}
from condition~\eqref{condition:h_dt}, Remark~\ref{rmk:interpolation_property}-(iii) and~\eqref{def:c0_h0_W1inf}.
\eqref{eq:b_Ch_n-1} is obtained from~\eqref{scheme_NS} for $n\ge 2$ and from the definition of $(u_h^0, p_h^0)$, i.e., the Stokes projection of $(u^0, 0)$ by~\eqref{eq:StokesPrj}, for $n=1$.
Hence \eqref{ieq:eh_epsh_Gronwall} holds from Proposition~\ref{prop:eh_epsh_Gronwall}-(iii).
Since the inequalities $A_i(\|u_h^{n-1}\|_{0,\infty}) \le a_i~(i=1,2)$ hold from P($n-1$)-(b),
\eqref{ieq:eh_epsh_Gronwall} implies
\begin{align*}
& \ol{D}_{\Delta t} x_n + y_n \le a_1 x_{n-1} + b_n, 
\end{align*}
which leads to
\begin{align}
x_n + \Delta t y_n \le \exp (a_1 \Delta t ) (x_{n-1} + \Delta t b_n)
\label{proof_thm1_1}
\end{align}
by $1 \le 1+a_1\Delta t \le \exp (a_1 \Delta t)$.
From P($n-1$)-(a), i.e.,
\begin{align}
x_{n-1} + \Delta t\sum_{i=1}^{n-1}y_i \le \exp\bigl\{ a_1(n-1)\Delta t\bigr\} \Bigl(x_0 + \Delta t \sum_{i=1}^{n-1}b_i \Bigr),
\label{proof_thm1_2}
\end{align}
it holds that
\begin{align}
x_n + \Delta t\sum_{i=1}^n y_i 
& \le \exp (a_1\Delta t) (x_{n-1} + \Delta t b_n) + \Delta t\sum_{i=1}^{n-1} y_i \quad \mbox{(by~\eqref{proof_thm1_1})}\notag\\
& \le \exp (a_1\Delta t) \Bigl( x_{n-1} + \Delta t\sum_{i=1}^{n-1} y_i + \Delta t b_n \Bigr) \notag\\
& \le \exp (a_1\Delta t) \Bigl[ \exp\bigl\{ a_1(n-1)\Delta t\bigr\} \Bigl(x_0 + \Delta t \sum_{i=1}^{n-1}b_i \Bigr) + \Delta t b_n \Bigr] \quad \mbox{(by~\eqref{proof_thm1_2})}\notag\\
& \le \exp (a_1n\Delta t) \Bigl(x_0 + \Delta t \sum_{i=1}^n b_i \Bigr),\notag
\end{align}
which is~\eqref{ieq:proof_thm_P_n}, i.e., P($n$)-(a).
\par
For the proofs of P($n$)-(b) and (c) we prepare the estimate of $\|e_h^n\|_1$.
From P($n$)-(a) and~\eqref{ieq:eh0_epsh0} it holds that
\begin{align}
& \nu\|D(e_h^n)\|_0^2 +\fz{\delta_0}{2}|\epsilon_h^n|_h^2 + \fz{1}{2}\|\ol{D}_{\Delta t}e_h\|_{l^2_n(L^2)}^2
\le \exp ( a_1T ) \Bigl[ c_I^2 h^2 
+ a_2\Bigl\{ \Delta t^2\|u\|_{Z^2}^2 + h^2 \bigl( \|(u,p)\|_{H^1(H^2\times H^1)}^2 + T \bigr) \Bigr\} \Bigr] \notag\\
& \quad \le \exp(a_1T) \Bigl[ a_2\Delta t^2\|u\|_{Z^2}^2 + h^2 \Bigl\{ a_2 \bigl( \|(u,p)\|_{H^1(H^2\times H^1)}^2 + T \bigr)+c_I^2\Bigr\} \Bigr]
\le \bigl\{ c_3 (\Delta t +h) \bigr\}^2,
\label{ieq:c3}
\end{align}
where
\begin{align*}
c_3 & \equiv \exp(a_1T/2) 
\max \Bigl\{ a_2^{1/2}\|u\|_{Z^2},\ a_2^{1/2} \bigl( \|(u,p)\|_{H^1(H^2\times H^1)} + T^{1/2} \bigr) + c_I\Bigr\}.
\end{align*}
\eqref{ieq:c3} implies
\begin{align}
&\|e_h^n\|_1 \le \alpha_1 \|D(e_h^n)\|_0 \le \fz{\alpha_1}{\sqrt{\nu}} c_3 (\Delta t + h) = c_\ast (\Delta t + h).\label{ieq:ehn_H1}
\end{align}
\par
We prove P($n$)-(b) and~(c).
Let $\Pi_h$ be the Lagrange interpolation operator stated in Lemma~\ref{lem:inverse_inequality}.
It holds that
\begin{align*}
\|u_h^n\|_{0,\infty} &\le \|u_h^n-\Pi_hu^n\|_{0,\infty} + \|\Pi_hu^n\|_{0,\infty} \le \alpha_{21} h^{-d/6}\|u_h^n-\Pi_hu^n\|_1 + \|\Pi_hu^n\|_{0,\infty} \\
& \le \alpha_{21} h^{-d/6} (\|u_h^n-\hat{u}_h^n\|_1 + \|\hat{u}_h^n-u^n\|_1 + \|u^n-\Pi_hu^n\|_1) + \|\Pi_hu^n\|_{0,\infty} \\
& \le \alpha_{21} h^{-d/6} \{ c_\ast (\Delta t+h) + \alpha_{31} h\|(u^n,p^n)\|_{H^2\times H^1} + \alpha_{24} h \|u^n\|_2 \} + \|u^n\|_{0,\infty} \quad \mbox{(by~\eqref{ieq:ehn_H1})}\\
& \le \alpha_{21} \{ c_\ast (c_0h_0^{d/12}+h_0^{1-d/6}) + (\alpha_{24} + \alpha_{31}) h_0^{1-d/6} \|(u,p)\|_{C(H^2\times H^1)} \} + \|u\|_{C(L^\infty)} \quad \mbox{(by \eqref{condition:h_dt})}\\
& \le 1 + \|u\|_{C(L^\infty)}\quad \mbox{(by~\eqref{def:c0_h0_Linf})},\\
\Delta t \|u_h^n\|_{1,\infty} & \le c_0h^{d/4} ( \|u_h^n-\Pi_hu^n\|_{1,\infty}+\|\Pi_hu^n\|_{1,\infty} )
\le c_0h^{d/4} ( \alpha_{22} h^{-d/2}\|u_h^n-\Pi_hu^n\|_1+\|\Pi_hu^n\|_{1,\infty} )\\
& \le c_0 \{ \alpha_{22} h^{-d/4} (\|u_h^n-\hat{u}_h^n\|_1 + \|\hat{u}_h^n-u^n\|_1 + \|u^n-\Pi_hu^n\|_1 ) +h^{d/4}\|\Pi_hu^n\|_{1,\infty} \}\\
& \le c_0 [ \alpha_{22} h^{-d/4} \{ c_\ast (\Delta t+h) + \alpha_{31}h \|(u^n,p^n)\|_{H^2\times H^1} + \alpha_{24} h \|u^n\|_2 \} + \alpha_{23} h^{d/4}\|u^n\|_{1,\infty} ]\\
& \le c_0 [ \alpha_{22} h^{-d/4} \{ c_\ast (c_0h^{d/4}+h) + (\alpha_{24} + \alpha_{31}) h \|(u^n,p^n)\|_{H^2\times H^1} \} + \alpha_{23} h^{d/4}\|u^n\|_{1,\infty} ]\\
& \le c_0 [ \alpha_{22} \{ c_\ast (c_0+h_0^{1-d/4}) + (\alpha_{24} + \alpha_{31}) h_0^{1-d/4} \|(u,p)\|_{C(H^2\times H^1)} \} + \alpha_{23} h_0^{d/4}\|u\|_{C(W^{1,\infty})} ]\\
& \le \delta_1\quad \mbox{(by~\eqref{def:c0_h0_W1inf})}.
\end{align*}
Therefore, P($n$) holds true.
\par
The proof of P($0$) is easier than that of the general step.
P($0$)-(a) obviously holds with equality.
P($0$)-(b) and (c) are obtained as follows.
\begin{align*}
\|u_h^0\|_{0,\infty} & \le \|u_h^0-\Pi_hu^0\|_{0,\infty} + \|\Pi_hu^0\|_{0,\infty} 
\le \alpha_{21} h^{-d/6} ( \|u_h^0 - u^0 \|_1 + \|u^0 - \Pi_hu^0\|_1 ) + \|\Pi_hu^0\|_{0,\infty} \\
& \le \alpha_{21} (\alpha_{31} + \alpha_{24}) h^{1-d/6} \|u^0 \|_2 + \|u^0\|_{0,\infty} 
\le 1 + \|u\|_{C(L^\infty)}\quad \mbox{(by~\eqref{def:c0_h0_Linf})},\\
\Delta t \|u_h^0\|_{1,\infty} & \le c_0h^{d/4} ( \|u_h^0-\Pi_hu^0\|_{1,\infty}+\|\Pi_hu^0\|_{1,\infty} )
\le c_0h^{d/4} ( \alpha_{22} h^{-d/2} \|u_h^0-\Pi_hu^0\|_1 +\|\Pi_hu^0\|_{1,\infty} ) \\
& \le c_0 \{  \alpha_{22} h^{-d/4} ( \|u_h^0 - u^0\|_1 + \|u^0-\Pi_hu^0\|_1 ) + h^{d/4}\|\Pi_hu^0\|_{1,\infty} \} \\
& \le c_0 \{  \alpha_{22} (\alpha_{31} + \alpha_{24}) h^{1-d/4} \|u^0\|_2 + \alpha_{23} h^{d/4}\|u^0\|_{1,\infty} \} 
\le \delta_1\quad \mbox{(by~\eqref{def:c0_h0_W1inf})}.
\end{align*}
Thus, the induction is completed.
\medskip\\
\textit{Step~3}:\ 
Finally we derive the results (i), (ii) and (iii) of the theorem.
The induction completed in the previous step implies that P($N_T$) holds true.
Hence we have~(i) and~(ii).
The first inequality of~\eqref{ieq:main_results} in (iii) is obtained from~\eqref{ieq:ehn_H1} and the estimate
\begin{align*}
\|u_h-u\|_{l^\infty(H^1)} \le \|e_h\|_{l^\infty(H^1)} + \|\eta\|_{l^\infty(H^1)} 
\le \|e_h\|_{l^\infty(H^1)} + \alpha_{31} h \|(u,p)\|_{C(H^2\times H^1)}.
\end{align*}
Combining the estimate
\begin{align*}
\Bigl\| \ol{D}_{\Delta t}u_h^n - \prz{u^n}{t} \Bigr\|_0 
& \le \|\ol{D}_{\Delta t}e_h^n\|_0 + \|\ol{D}_{\Delta t}\eta^n\|_0 + \Bigl\| \ol{D}_{\Delta t}u^n - \prz{u^n}{t} \Bigr\|_0 \\
& \le \|\ol{D}_{\Delta t}e_h^n\|_0 + \fz{\alpha_{31} h}{\sqrt{\Delta t}}\|(u,p)\|_{H^1(t^{n-1},t^n; H^2\times H^1)} + \sqrt{\fz{\Delta t}{3}} \Bigl\| \prz{^2u}{t^2} \Bigr\|_{L^2(t^{n-1},t^n; L^2)}
\end{align*}
with~\eqref{ieq:c3}, we get the second inequality of~\eqref{ieq:main_results}.
Here, for the estimates of the last two terms, we have used the equalities
\begin{align*}
\bigl( \ol{D}_{\Delta t} \eta^n \bigr) (x) = \int_0^1 \prz{\eta}{t} (x, t^{n-1}+s\Delta t) ds,\qquad
\Bigl( \ol{D}_{\Delta t}u^n - \prz{u^n}{t} \Bigr)(x) = -\Delta t \int_0^1 s\prz{^2u}{t^2} (x, t^{n-1}+s\Delta t) ds.
\end{align*}
\par
We prove the third inequality of~\eqref{ieq:main_results}.
It holds that
\begin{align}
\|\epsilon_h^n\|_0 & \le \|(e_h^n,\epsilon_h^n)\|_{V\times Q} \le \fz{1}{\alpha_{30}} \sup_{(v_h,q_h)\in V_h\times Q_h} \fz{\mathcal{A}_h\bigl( (e_h^n,\epsilon_h^n), (v_h,q_h) \bigr)}{\|(v_h,q_h)\|_{V\times Q}} 
= \fz{1}{\alpha_{30}} \sup_{(v_h,q_h)\in V_h\times Q_h} \fz{\lA R_h^n, v_h\rA -(\ol{D}_{\Delta t}e_h^n, v_h)}{\|(v_h,q_h)\|_{V\times Q}} 
\notag\\
& 
\le c_{(u,p)} \Bigl\{ \sqrt{\Delta t} \|u\|_{Z^2(t^{n-1},t^n)} 
+ h\Bigl( \fz{1}{\sqrt{\Delta t}} \|(u,p)\|_{H^1(t^{n-1},t^n; H^2\times H^1)} + 1 \Bigr) 
+ \|e_h^{n-1}\|_1 + \|\ol{D}_{\Delta t}e_h^n\|_0
\Big\} 
\label{ieq:epsilon_h_0}
\end{align}
for $n=1,\cdots,N_T$.
Here we have used Lemmas~\ref{lem:stability_Ah} and~\ref{lem:estimates_R}, the inequality~$\|e_h^{n-1}\|_0\le\|e_h^{n-1}\|_1$ and~\eqref{ieq:stability}.
We obtain the result by combining~\eqref{ieq:epsilon_h_0}, \eqref{ieq:c3} and the estimate 
\begin{align*}
\phantom{MMMMMMMa}
\|p_h-p\|_{l^2(L^2)} \le \|\epsilon_h\|_{l^2(L^2)} + \|\hat{p}_h-p\|_{l^2(L^2)} 
\le \|\epsilon_h\|_{l^2(L^2)} + \sqrt{T} \alpha_{31} h \|(u,p)\|_{C(H^2\times H^1)}.
\phantom{MMMMMMa}
\end{align*}
%
%
%
%
%
%
\subsection{Proof of Theorem~\ref{thm:main_results_L2}}
We use the next lemma, which is proved in Appendix~\ref{proof:lem_R_L2}.
\begin{Lem}\label{lem:estimates_R_L2}
Suppose Hypotheses~\ref{hyp:regularity} and~\ref{hyp:L2} hold.
Let $n\in \{1,\cdots,N_T\}$ be a fixed number and $u_h^{n-1} \in V_h$ be known.
Then, under the conditions \eqref{ieq:dt_uh_1inf} and \eqref{ieq:dt_u_1inf} it holds that 
\begin{subequations}
\begin{align}
\| R_{h2}^n \|_0 & \le c_u \bigl( \|e_h^{n-1}\|_0 + h^2 \|(u,p)^{n-1}\|_{H^2\times H^1}\bigr),
\label{ieq:R2_L2}\\
\| R_{h3}^n \|_{V_h^\prime} 
& \le c_u \Bigl( \|(u,p)^{n-1}\|_{H^2\times H^1} \|e_h^{n-1}\|_0 + \fz{h^2}{\sqrt{\Delta t}} \|(u,p)\|_{H^1(t^{n-1},t^n; H^2\times H^1)} + h^2 \sum_{k=1}^2 \|(u,p)^{n-1}\|_{H^2\times H^1}^k \Bigr),
\label{ieq:R3_L2}\\
\| R_{h4}^n \|_{V_h^\prime} 
& \le c_u \bigl( 1+h^{-d/6}\|e_h^{n-1}\|_1 \bigr) \bigl( \|e_h^{n-1}\|_0 + h^2 \|(u,p)^{n-1}\|_{H^2\times H^1} \bigr).
\label{ieq:R4_L2}
\end{align}
\end{subequations}
\end{Lem}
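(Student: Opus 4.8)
The plan is to estimate each of the three residual terms $R_{h2}^n$, $R_{h3}^n$, $R_{h4}^n$ in the norms indicated, by refining the $L^2$-norm estimates of Lemma~\ref{lem:estimates_R} using the improved Stokes projection bound~\eqref{ieq:StokesPrj_L2} available under Hypothesis~\ref{hyp:L2}. The key observation is that in Lemma~\ref{lem:estimates_R} the error term $\eta = u - \hat{u}_h$ entered only through its $H^1$-bound (giving an $O(h)$ contribution), whereas now the $H^2$-regularity of the Stokes problem upgrades $\|\eta\|_0$ to $O(h^2)$ via~\eqref{ieq:StokesPrj_L2}. Wherever a residual picks up $\eta$ in an $L^2$-pairing rather than an $H^1$-pairing, the order improves from $h$ to $h^2$, which is exactly what the three estimates record.

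\medskip\noindent
\textbf{Estimate of $R_{h2}^n$.} First I would write $R_{h2}^n = \Delta t^{-1}\{u^{n-1}\circ X_1(u_h^{n-1},\Delta t) - u^{n-1}\circ X_1(u^{n-1},\Delta t)\}$ and apply~\eqref{ieq:v-vX_3} with $a = u^{n-1}$, $b = u_h^{n-1}$, $g = u^{n-1}$; this produces a factor $\|u_h^{n-1} - u^{n-1}\|_0 \le \|e_h^{n-1}\|_0 + \|\eta^{n-1}\|_0$. The $\|e_h^{n-1}\|_0$ term is kept as is, and $\|\eta^{n-1}\|_0$ is now bounded by $c h^2 \|(u,p)^{n-1}\|_{H^2\times H^1}$ using~\eqref{ieq:StokesPrj_L2}. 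The $\Delta t^{-1}$ is absorbed by the $\Delta t$ coming from~\eqref{ieq:v-vX_3}, yielding~\eqref{ieq:R2_L2}.

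\medskip\noindent
\textbf{Estimate of $R_{h3}^n$ and $R_{h4}^n$ in the dual norm.} These two are measured in $\|\cdot\|_{V_h^\prime}$, so for each I would pair against an arbitrary $v_h\in V_h$ and bound $\lA R_{hi}^n, v_h\rA / \|v_h\|_1$. For $R_{h3}^n = \Delta t^{-1}\{\eta^n - \eta^{n-1}\circ X_1(u_h^{n-1},\Delta t)\}$, the natural splitting is $\eta^n - \eta^{n-1} = \int \partial_t\eta\,dt$ plus the transport difference $\eta^{n-1} - \eta^{n-1}\circ X_1(u_h^{n-1},\Delta t)$; the time-difference part gives the $h^2 \Delta t^{-1/2}$ term after~\eqref{ieq:StokesPrj_L2} applied to $\partial_t\eta$, while the transport part is handled by the dual estimate~\eqref{ieq:v-vX_2} (which trades one derivative for the $\Delta t$ factor) combined with $\|u_h^{n-1}\|_{1,\infty}$ controlled through $\|(u,p)^{n-1}\|_{H^2\times H^1}$ and the induction bound~\eqref{ieq:ehn_H1}; this accounts for the polynomial dependence $\sum_{k=1}^2 \|(u,p)^{n-1}\|_{H^2\times H^1}^k$. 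For $R_{h4}^n = -\Delta t^{-1}\{e_h^{n-1} - e_h^{n-1}\circ X_1(u_h^{n-1},\Delta t)\}$, since $e_h^{n-1}\in V_h$ is only a finite-element function I would again use the dual estimate~\eqref{ieq:v-vX_2} to pull out $\Delta t\|u_h^{n-1}\|_{1,\infty}\|e_h^{n-1}\|_0$, and then bound $\|u_h^{n-1}\|_{1,\infty}$ by $c(1 + h^{-d/6}\|e_h^{n-1}\|_1)$ using the inverse inequality~\eqref{ieq:inverse_1_inf} together with interpolation; the $\|e_h^{n-1}\|_0$ is left alone while any stray $\eta$ contribution is upgraded to $h^2$ by~\eqref{ieq:StokesPrj_L2}.

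\medskip\noindent
\textbf{Main obstacle.} The delicate point will be $R_{h3}^n$ and $R_{h4}^n$: because both are $O(\Delta t^{-1})$ prefactored differences, the naive $L^2$-bound is an order too large, and the gain of one power of $\Delta t$ needed to cancel the prefactor comes precisely from measuring in the weaker dual norm $V_h^\prime$ via~\eqref{ieq:v-vX_2}. I expect the bookkeeping of the $h^{-d/6}$ inverse-inequality factor in~\eqref{ieq:R4_L2} to be the subtle part, since it must later be absorbed against $\|e_h^{n-1}\|_1 = O(\Delta t + h)$ under the restriction $\Delta t = O(h^{d/4})$ when this lemma is fed into the final Grönwall argument for~\eqref{ieq:L2}; keeping the nonlinear product $h^{-d/6}\|e_h^{n-1}\|_1 \cdot \|e_h^{n-1}\|_0$ in a form that the stability condition can control is the crux.
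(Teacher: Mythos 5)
Your estimate of $R_{h2}^n$ is exactly the paper's argument and is fine. The gap is in $R_{h3}^n$ and $R_{h4}^n$: you apply the dual estimate~\eqref{ieq:v-vX_2} with the \emph{discrete} velocity $u_h^{n-1}$ as the transport field, which produces the factor $\|u_h^{n-1}\|_{1,\infty}$. Under the lemma's hypotheses this quantity is only controlled by $\delta_1/\Delta t$ (from~\eqref{ieq:dt_uh_1inf}), and even invoking the induction bound~\eqref{ieq:ehn_H1} plus the inverse inequality~\eqref{ieq:inverse_1_inf} only gives $\|u_h^{n-1}\|_{1,\infty}\le c\,h^{-d/2}\|e_h^{n-1}\|_1+c \le c\,h^{-d/2}(\Delta t+h)+c$, which is unbounded as $h\downarrow 0$ under $\Delta t=O(h^{d/4})$; in particular your claimed bound $\|u_h^{n-1}\|_{1,\infty}\le c(1+h^{-d/6}\|e_h^{n-1}\|_1)$ is not attainable, since the $H^1\to W^{1,\infty}$ inverse inequality costs $h^{-d/2}$, not $h^{-d/6}$. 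Consequently your transport terms are bounded only by $c\,h^2\|u_h^{n-1}\|_{1,\infty}$, which cannot be absorbed into the right-hand sides of~\eqref{ieq:R3_L2} and~\eqref{ieq:R4_L2}, and the argument fails.

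The missing idea is the insertion of the intermediate composite with the \emph{exact} velocity. The paper splits
\begin{align*}
R_{h3}^n = \ol{D}_{\Delta t}\eta^n
+ \fz{1}{\Delta t}\bigl\{ \eta^{n-1}-\eta^{n-1}\circ X_1(u^{n-1},\Delta t) \bigr\}
+ \fz{1}{\Delta t}\bigl\{ \eta^{n-1}\circ X_1(u^{n-1},\Delta t)-\eta^{n-1}\circ X_1(u_h^{n-1},\Delta t) \bigr\},
\end{align*}
so that~\eqref{ieq:v-vX_2} is applied only with $a=u^{n-1}$, whose $W^{1,\infty}$ norm is a constant $c_u$ by Hypothesis~\ref{hyp:regularity}. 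The remaining difference of the two transports is estimated by duality: pairing with $v_h$, using H\"older in $L^1$--$L^\infty$, the $L^1$ estimate~\eqref{ieq:v-vX_4} (which needs only $\|u_h^{n-1}-u^{n-1}\|_0$, no $W^{1,\infty}$ norm of $u_h^{n-1}$), and $\|v_h\|_{0,\infty}\le\alpha_{21}h^{-d/6}\|v_h\|_1$ from~\eqref{ieq:inverse_0_inf} --- this is the true origin of the $h^{-d/6}$ in~\eqref{ieq:R4_L2}, on the test function rather than on $u_h^{n-1}$. This third term, carrying $\|\eta^{n-1}\|_1\|\eta^{n-1}\|_0 \sim h^{3}\|(u,p)^{n-1}\|_{H^2\times H^1}^2$, is also what produces the quadratic term in $\sum_{k=1}^2\|(u,p)^{n-1}\|_{H^2\times H^1}^k$, not a bound on $\|u_h^{n-1}\|_{1,\infty}$ as you suggest. $R_{h4}^n$ is then handled by the same two-term mechanism with $\eta^{n-1}$ replaced by $-e_h^{n-1}$, giving~\eqref{ieq:R4_L2} with the factor $h^{-d/6}\|e_h^{n-1}\|_1$ that the later Gronwall argument can absorb.
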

\begin{proof}[Proof of Theorem~\ref{thm:main_results_L2}]
Since we have $\|e_h\|_{l^\infty(H^1)}\le c_\ast (\Delta t+h)\le c_\ast (c_0+h_0^{1-d/4})h^{d/4}$ from \eqref{ieq:ehn_H1} and \eqref{condition:h_dt}, \eqref{ieq:R4_L2} implies
\begin{align}
\| R_{h4}^n \|_{V_h^\prime} \le c_u c_\ast \bigl( \|e_h^{n-1}\|_0 + h^2 \|(u,p)^{n-1}\|_{H^2\times H^1} \bigr).
\label{ieq:R4_L2_1}
\end{align}
Substituting $(e_h^n, -\epsilon_h^n)$ into $(v_h,q_h)$ in~\eqref{eq:e_epsilon_R} and using Lemma~\ref{lem:Korn}, \eqref{ieq:R1}, \eqref{ieq:R2_L2}, \eqref{ieq:R3_L2}, \eqref{ieq:R4_L2_1} and the inequality $ab\le \beta a^2/2+b^2/(2\beta)\ (\beta >0)$, we have
\begin{align*}
& \ol{D}_{\Delta t} \Bigl( \fz{1}{2}\|e_h^n\|_0^2 \Bigr) + \fz{2\nu}{\alpha_1^2} \|e_h^n\|_1^2 + \delta_0|\epsilon_h^n|_h^2 \le \sum_{i=1}^4 \lA R_{hi}^n, e_h^n\rA \\
& \quad \le c_u\Bigl( \fz{1}{\beta_2} + \fz{\|(u,p)\|_{C(H^2\times H^1)}^2}{\beta_3} + \fz{c_\ast^2}{\beta_4} \Bigr) \|e_h^{n-1}\|_0^2 + \Bigl( \sum_{i=1}^4 \beta_i \Bigr) \|e_h^n\|_1^2 + c_u^\prime \biggl[ \fz{\Delta t}{\beta_1} \|u\|_{Z^2(t^{n-1},t^n)}^2 
\\
& \qquad + \fz{h^4}{\beta_3\Delta t}\|(u,p)\|_{H^1(t^{n-1},t^n; H^2\times H^1)}^2 + h^4 \Bigl\{ \Bigl( \fz{1}{\beta_2} + \fz{c_\ast^2}{\beta_4} \Bigr) \| (u,p) \|_{C(H^2\times H^1)}^2 + \fz{1}{\beta_3}\sum_{k=1}^2 \|(u,p)\|_{C(H^2\times H^1)}^{2k} \Bigr\} \biggr]
\end{align*}
for any $\beta_i > 0~(i=1,\cdots,4)$, where the inequality $\|e_h^n\|_0 \le \|e_h^n\|_1$ has been employed.
Hence, it holds that
\begin{align*}
\ol{D}_{\Delta t} \Bigl( \fz{1}{2}\|e_h^n\|_0^2 \Bigr) + \fz{\nu}{\alpha_1^2} \|e_h^n\|_1^2 & \le c_{(u,p)} \|e_h^{n-1}\|_0^2 
+ c_{(u,p)}^\prime \Bigl( \Delta t \|u\|_{Z^2(t^{n-1},t^n)}^2 +  \fz{h^4}{\Delta t}\|(u,p)\|_{H^1(t^{n-1},t^n; H^2\times H^1)}^2 + h^4 \Bigr)
\end{align*}
by setting $\beta_i = \nu/(4 \alpha_1^2)~(i=1,\cdots,4)$.
From the discrete Gronwall's inequality there exists a positive constant~$c_4$ independent of $h$ and $\Delta t$ such that
\begin{align*}
\|e_h\|_{l^\infty(L^2)} & \le c_4 ( \|e_h^0\|_0 + \Delta t + h^2 ).
\end{align*}
Using \eqref{ieq:StokesPrj_L2}, we have
\begin{align*}
\|e_h^0\|_0 &\le \|u_h^0-u^0\|_0 + \|u^0-\hat{u}_h^0\|_0 \le 2\alpha_{32} h^2 \|(u^0,p^0)\|_{H^2\times H^1},\\
\|u_h-u\|_{l^\infty(L^2)} & \le \|e_h\|_{l^\infty(L^2)} + \|\eta\|_{l^\infty(L^2)} \le \|e_h\|_{l^\infty(L^2)} + \alpha_{32}h^2 \|(u,p)\|_{C(H^2\times H^1)}. 
\end{align*}
Combining these three inequalities together, we get~\eqref{ieq:L2}.
\end{proof}
%
%
%
%
%
%
%
%
%
%
%
%
%
%
%
%
%
%
%
\section{Numerical results}\label{sec:numerics}
In this section two- and three-dimensional test problems are computed by scheme~\eqref{scheme_NS} in order to recognize the theoretical convergence orders numerically.
\par
For the computation of the integral
\begin{align*}
\int_K u^{n-1}_h\circ X_1(u_h^{n-1},\Delta t)(x) v_h(x)\ dx
\end{align*}
appearing in scheme~\eqref{scheme_NS} we employ quadrature formulae~\cite{St-1971} of degree five for $d=2$~(seven points) and $3$~(fifteen points).
The results obtained in Theorems~\ref{thm:main_results} and~\ref{thm:main_results_L2} hold  for any fixed $\delta_0$.
Here we set $\delta_0 = 1$.
The system of linear equations is solved by MINRES~\cite{Betal-1994,S-2003}.
\begin{Ex}\label{ex:test_prob}
In problem~\eqref{prob:NS} we set $\Omega=(0, 1)^d$, $T=1$ and four values of $\nu$, 
\begin{align*}
\nu=10^{-k}, \quad k=1,\cdots,4.
\end{align*}
The functions $f$ and $u^0$ are given so that the exact solution is as follows:
\medskip\\
for $d=2$
\begin{align*}
u (x,t) &= \Bigl( \prz{\psi}{x_2}, -\prz{\psi}{x_1} \Bigr)(x,t),\quad p(x,t) = \sin \{\pi (x_1 + 2 x_2 + t)\},\\
\psi(x,t) & \equiv \fz{\sqrt{3}}{2\pi} \sin^2 (\pi x_1) \sin^2 (\pi x_2) \sin \{ \pi (x_1+x_2 + t) \},
\intertext{for $d=3$}
u (x,t)  & = {\rm rot} \, \Psi (x, t),\quad
p (x,t)  = \sin \{\pi (x_1 + 2 x_2 + x_3 + t)\},\\
\Psi_1(x,t) & \equiv \fz{8\sqrt{3}}{27\pi}\sin (\pi x_1) \sin^2 (\pi x_2) \sin^2 (\pi x_3) \sin\{\pi (x_2+x_3+t)\},\\
\Psi_2(x,t) & \equiv \fz{8\sqrt{3}}{27\pi}\sin^2 (\pi x_1) \sin (\pi x_2) \sin^2 (\pi x_3) \sin\{\pi (x_3+x_1+t)\},\\
\Psi_3(x,t) & \equiv \fz{8\sqrt{3}}{27\pi}\sin^2 (\pi x_1) \sin^2 (\pi x_2) \sin (\pi x_3) \sin\{\pi (x_1+x_2+t)\}.
\end{align*}
These solutions are normalized so that $\|u\|_{C(L^\infty)}=\|p\|_{C(L^\infty)}=1$.
\end{Ex}
\par
Let $N$ be the division number of each side of the domain.
We set $N =64, 128, 256$ and $512$ for $d=2$ and $N=64$ and $128$ for $d=3$, and (re)define $h\equiv 1/N$.
Local meshes are shown in Figure~\ref{fig:Mesh} for $d=2$ (left, $N=64$, in $[0.9,1]^2$) and $3$ (right, $N=64$, in $[0.9,1]^3$).
Setting $\Delta t=\gamma_1h$ and $\gamma_2 h^2$ ($\gamma_1=4,~\gamma_2=256$), we solve Example~\ref{ex:test_prob} by scheme~\eqref{scheme_NS} with $u_h^0$, the first component of the Stokes projection of $(u^0, 0)$ by~\eqref{eq:StokesPrj}.
The two relations between $\Delta t$ and $h$, i.e., $\Delta t=\gamma_1h$ and $\gamma_2 h^2$, are employed in order to recognize the convergence orders of~\eqref{ieq:main_results} and~\eqref{ieq:L2}, respectively and we have $(\Delta t =) \gamma_1h = \gamma_2 h^2$ for $h=1/64$.
For the solution $(u_h,p_h)$ of scheme~\eqref{scheme_NS} we define the relative errors $Er1$ and $Er2$ by
\begin{align*}
Er1 \equiv \fz{\|u_h-\Pi_hu\|_{l^2(H^1)} + \|p_h-\Pi_hp\|_{l^2(L^2)}}{\|\Pi_hu\|_{l^2(H^1)}+ \|\Pi_hp\|_{l^2(L^2)}},\qquad
Er2 \equiv \fz{\|u_h-\Pi_hu\|_{l^\infty(L^2)}}{\|\Pi_hu\|_{l^\infty(L^2)}},
\end{align*}
where for the pressure we have used the same symbol $\Pi_h$ as its scalar version, i.e., $\Pi_h: C(\bar{\Omega}) \to M_h$.
Figure~\ref{fig:Error} shows the graphs of $Er1$ versus $h$ for $d=2$ and $3$ (left, $\Delta t=\gamma_1h$) and $Er2$ versus $h$ for $d=2$ (right, $\Delta t=\gamma_2h^2$) in logarithmic scale, where the symbols are summarized in Table~\ref{table:symbol}.
The values of $Er1$, $Er2$ and the slopes are presented in Table~\ref{table:Error}.
We can see that $Er1$ is almost of first order in $h$ for both $d = 2$ and $3$ and that $Er2$ is almost of second order in $h$.
These results are consistent with Theorems~\ref{thm:main_results} and~\ref{thm:main_results_L2}.
\begin{figure}[!htbp]
\centering
\includegraphics[height=4cm,clip]{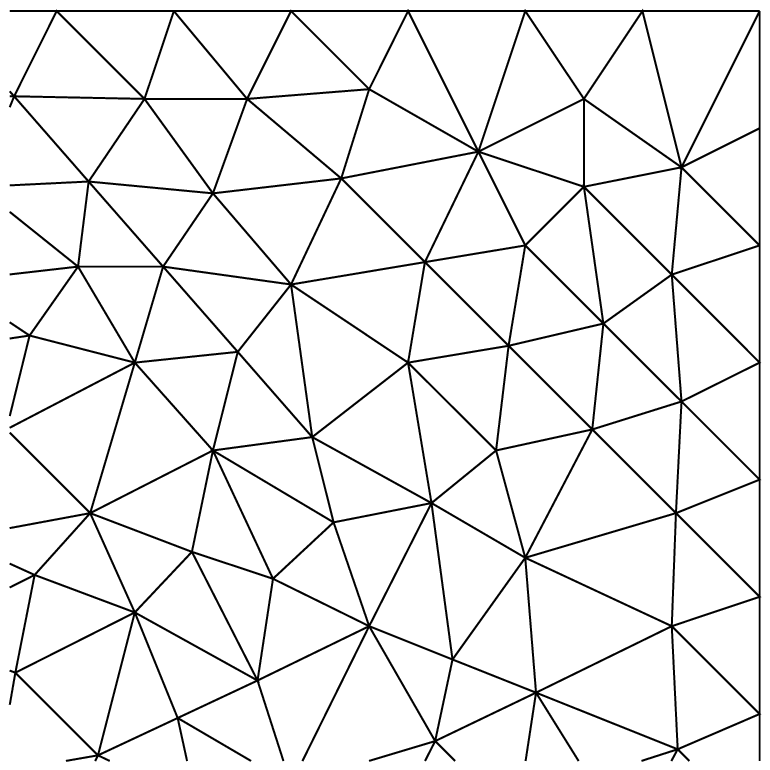}
\quad
\includegraphics[height=4cm,clip]{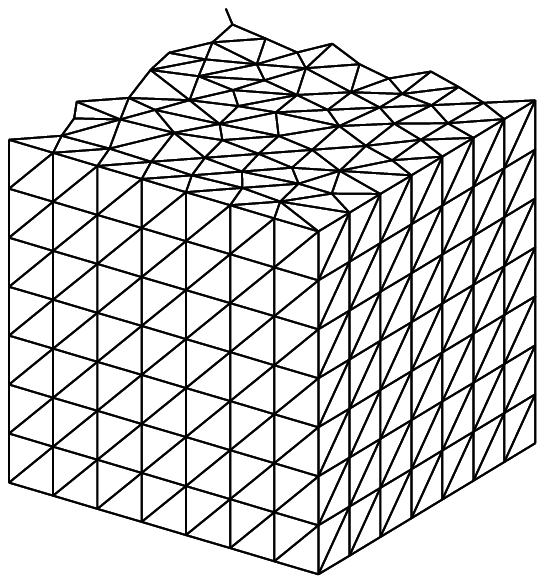}
\caption{Local meshes  for $d=2$~(left, $N=64$, in $[0.9,1]^2$) and for $d=3$~(right, $N=64$,  in  $[0.9,1]^3$ ).}
\label{fig:Mesh}
\end{figure}
%
%
%
%
%
%
%
%
\section{Conclusions}\label{sec:conclusions}
A combined finite element scheme with a Lagrange-Galerkin method and Brezzi-Pitk\"aranta's stabilization method for the Navier-Stokes equations proposed in~\cite{NT-2008-JSIAM,N-2008-JSCES} has been theoretically analyzed.
Convergence with the optimal error estimates of $O(\Delta t+h)$ for the velocity in $H^1$-norm and the pressure in $L^2$-norm  (Theorem~\ref{thm:main_results}) and of $O(\Delta t+h^2)$ for the velocity in $L^2$-norm  (Theorem~\ref{thm:main_results_L2}) have been proved.
The scheme has the advantages of both method, robustness for convection-dominated problems, symmetry of the resulting matrix and the small number of DOF.  
We note that it is a fully discrete stabilized LG scheme in the sense that the exact solvability of ordinary differential equations describing the particle path is not required.
In order to provide the initial approximate velocity we have introduced a stabilized Stokes projection, which works well in the analysis without any loss of convergence order.
The theoretical convergence orders have been recognized numerically by two- and three-dimensional computations in Example~\ref{ex:test_prob}.
It is not difficult to consider a fully discrete stabilized LG scheme of second order in time due to the idea of~\cite{BMMR-1997,ER-1981}, and its convergence with the optimal error estimates will be proved by extending the argument of this paper.
%
%
%
\section*{Acknowledgements}
This work was supported by JSPS (the Japan Society for the Promotion of Science) under the Japanese-German Graduate Externship (Mathematical Fluid Dynamics) and by Waseda University under Project research, Spectral analysis and its application to the stability theory of the Navier-Stokes equations of Research Institute for Science and Engineering.
The authors are indebted to JSPS also for Grant-in-Aid for Young Scientists~(B), No.~26800091 to the first author and for Grants-in-Aid for Scientific Research~(C), No.~25400212 and (S), No.~24224004 to the second author.
\begin{figure}[!htbp]
\centering
\includegraphics[height=7.8cm,clip]{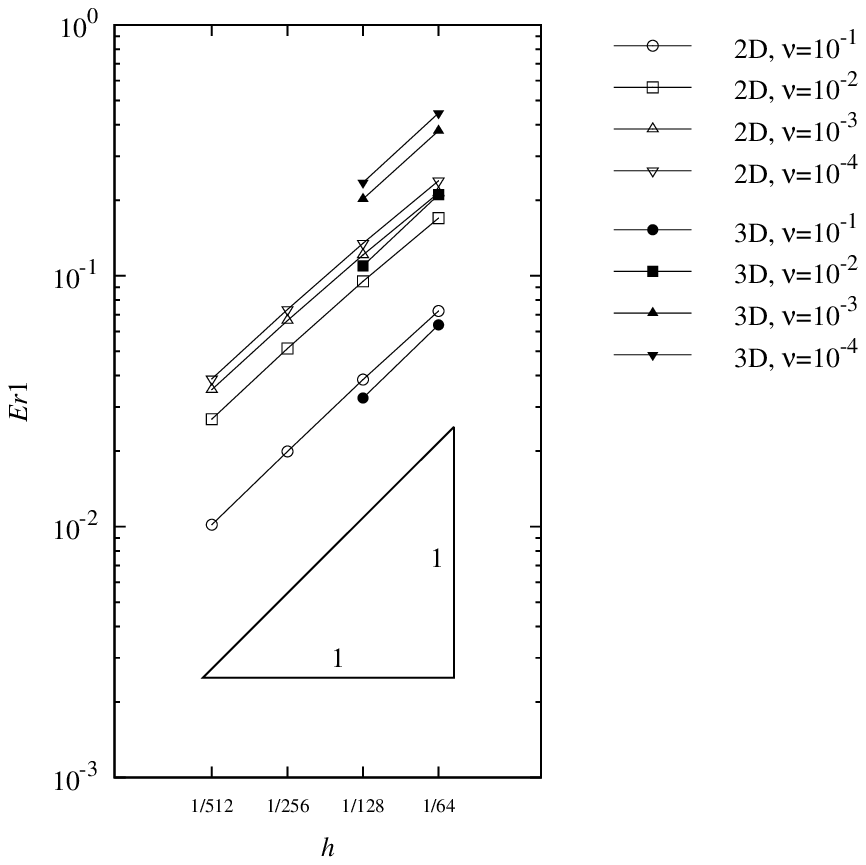}
\includegraphics[height=7.8cm,clip]{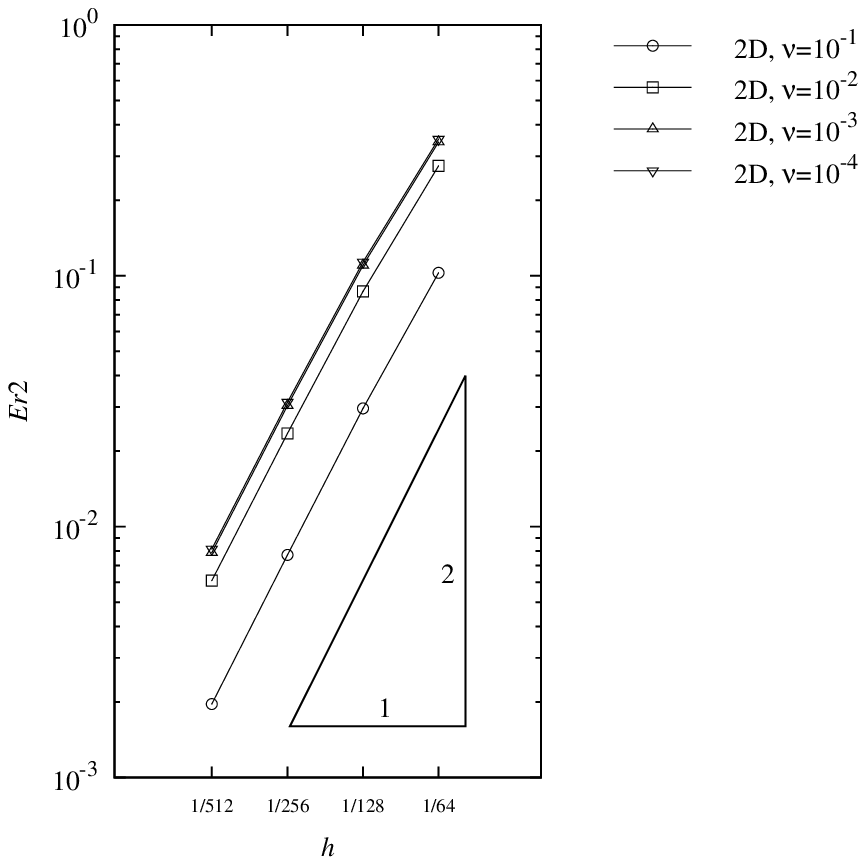}
\caption{$Er1$ vs. $h$ for $d=2$ and $3$~(left, $\Delta t= \gamma_1 h, \, \gamma_1=4$) and $Er2$ vs. $h$ for $d=2$~(right, $\Delta t=\gamma_2 h^2, \, \gamma_2=256$).}\label{fig:Error}
\end{figure}
\begin{table}[!htbp]
\caption{Symbols used in Figure~\ref{fig:Error}.}
\label{table:symbol}
\begin{center}
\begin{tabular}{ccccc}
\hline
 & \multicolumn{4}{c}{$\nu$} \\ \cline{2-5}
$d$ & $10^{-1}$ & $10^{-2}$ & $10^{-3}$ & $10^{-4}$ \\ \hline
$2$ & {\LARGE $ \circ$} & $\Box$ & $\triangle$ & {\Large $\triangledown$} \\
$3$ & {\LARGE $\bullet$} & $\blacksquare$ & {\Large $\blacktriangle$} & $\blacktriangledown$ \\
\hline
\end{tabular}
\end{center}
\bigskip
\end{table}
\begin{table}[!htbp]
\centering
\caption{Values of $Er1$, $Er2$ and slopes of the graphs in Figure~\ref{fig:Error}.}
\label{table:Error}
\begin{tabular}{lrrrcrrcrr}
\hline
&&
\multicolumn{6}{c}{$Er1$} & \multicolumn{2}{c}{$Er2$} \\ \cline{3-7} \cline{9-10}
& \multicolumn{1}{c}{$N$} & \multicolumn{1}{c}{$d=2$} & \multicolumn{1}{c}{slope} && \multicolumn{1}{c}{$d=3$} & \multicolumn{1}{c}{slope} && \multicolumn{1}{c}{$d=2$} & \multicolumn{1}{c}{slope} \\ 
\hline\hline
$\nu=10^{-1}:$
&   $64$ & $7.24\times 10^{-2}$ &       --- && $6.37\times 10^{-2}$ &      ---   && $1.03 \times 10^{-1}$ &       --- \\
& $128$ & $3.85\times 10^{-2}$ & $0.91$ && $3.25\times 10^{-2}$ & $0.97$ && $2.96 \times 10^{-2}$ & $1.80$ \\
& $256$ & $1.99\times 10^{-2}$ & $0.95$ && --- & --- && $7.71 \times 10^{-3}$ & $1.94$ \\
& $512$ & $1.01\times 10^{-2}$ & $0.97$ && --- & --- && $1.96 \times 10^{-3}$ & $1.97$ \\ \hline
$\nu=10^{-2}:$
&   $64$ & $1.70\times 10^{-1}$ &    ---    && $2.10\times 10^{-1}$ &     ---    && $2.74\times 10^{-1}$ & ---    \\
& $128$ & $9.51\times 10^{-2}$ & $0.84$ && $1.10\times 10^{-1}$ & $0.94$ && $8.66\times 10^{-2}$ & $1.66$ \\
& $256$ & $5.13\times 10^{-2}$ & $0.89$ && --- & --- && $2.35\times 10^{-2}$ & $1.88$ \\
& $512$ & $2.68\times 10^{-2}$ & $0.93$ && --- & --- && $6.09\times 10^{-3}$ & $1.95$ \\ \hline
$\nu=10^{-3}:$
&   $64$ & $2.14\times 10^{-1}$ &    ---    && $3.78\times 10^{-1}$ &    ---    && $3.41\times 10^{-1}$ & ---    \\
& $128$ & $1.21\times 10^{-1}$ & $0.82$ && $2.02\times 10^{-1}$ & $0.90$ && $1.10\times 10^{-1}$ & $1.63$ \\
& $256$ & $6.63\times 10^{-2}$ & $0.87$ && --- & --- && $3.03\times 10^{-2}$ & $1.86$ \\
& $512$ & $3.51\times 10^{-2}$ & $0.92$ && --- & --- && $7.88\times 10^{-3}$ & $1.95$ \\ \hline
$\nu=10^{-4}:$
&   $64$ & $2.39\times 10^{-1}$ &    ---    && $4.45\times 10^{-1}$ &     ---    && $3.50\times 10^{-1}$ & ---    \\
& $128$ & $1.35\times 10^{-1}$ & $0.83$ && $2.35\times 10^{-1}$ & $0.92$ && $1.13\times 10^{-1}$ & $1.63$ \\
& $256$ & $7.34\times 10^{-2}$ & $0.88$ && --- & --- && $3.13\times 10^{-2}$ & $1.85$ \\
& $512$ & $3.88\times 10^{-2}$ & $0.92$ && --- & --- && $8.14\times 10^{-3}$ & $1.94$ \\ \hline
\end{tabular}
\end{table}
%
%
%
%
\appendix
\renewcommand{\thesection}{A}
\setcounter{Lem}{0}
\renewcommand{\theLem}{\thesection.\arabic{Lem}}
\setcounter{Rmk}{0}
\renewcommand{\theRmk}{\thesection.\arabic{Rmk}}
\setcounter{Cor}{0}
\renewcommand{\theCor}{\thesection.\arabic{Cor}}
\setcounter{figure}{0}
\renewcommand{\thefigure}{\thesection.\arabic{figure}}
\setcounter{equation}{0}
\makeatletter
    \renewcommand{\theequation}{%
    \thesection.\arabic{equation}}
    \@addtoreset{equation}{section}
  \makeatother
%
%
\section*{Appendix}
\subsection{Proof of Lemma~\ref{lem:estimates_R}}\label{proof:lem_R}
Let $t(s)\equiv t^{n-1} +s\Delta t~(s\in [0,1])$.
We prove~\eqref{ieq:R1}.
Let $y(x, s)\equiv x-(1-s)u^{n-1}(x)\Delta t$.
It holds that 
\begin{align}
R_{h1}^n(x) 
& = \Bigl\{ \Bigl( \prz{}{t} + u^n(x)\cdot\nabla \Bigr) u \Bigr\} (x, t^n) - \fz{1}{\Delta t} \Bigl[ u\bigl( y(x,s), t(s) \bigr) \Bigr]_{s=0}^1 \notag\\
& = \Bigl\{ \Bigl( \prz{}{t} + u^{n-1}(x)\cdot\nabla \Bigr) u \Bigr\} (x, t^n) + \bigl\{ \bigl( (u^n - u^{n-1})(x)\cdot\nabla \bigr) u^n \bigr\} (x) 
- \int_0^1 \Bigl\{ \Bigl( \prz{}{t} + u^{n-1}(x)\cdot\nabla \Bigr) u \Bigr\} \bigl( y(x,s), t(s) \bigr) ds \notag\\
& = \Delta t \int_0^1ds \int_{s}^1 \Bigl\{ \Bigl( \prz{}{t} + u^{n-1}(x)\cdot\nabla \Bigr)^2 u \Bigr\} \bigl( y(x,s_1), t(s_1) \bigr) ds_1 + \Delta t \int_0^1 \Bigl\{ \Bigl( \prz{u}{t} \bigl( x, t(s) \bigr)\cdot\nabla \Bigr) u^n \Bigr\} (x) ds \notag\\
& = \Delta t \int_0^1 s_1 \Bigl\{ \Bigl( \prz{}{t} + u^{n-1}(x)\cdot\nabla \Bigr)^2 u \Bigr\} \bigl( y(x,s_1), t(s_1) \bigr) ds_1 + \Delta t \int_0^1 \Bigl\{ \Bigl( \prz{u}{t} \bigl( x, t(s) \bigr)\cdot\nabla \Bigr) u^n \Bigr\} (x) ds \notag\\
&\equiv R^n_{h11}(x) + R^n_{h12}(x). \notag
\end{align}
Each term $R_{h1i}^n $ is estimated as
\begin{subequations}\label{ieqs:Rh_11_12}
\begin{align}
\| R_{h11}^n \|_0 & \le \Delta t \int_0^1 s_1 \Bigl\| \Bigl\{ \Bigl( \prz{}{t} + u^{n-1}(\cdot)\cdot\nabla \Bigr)^2 u \Bigr\} \bigl( y(\cdot,s_1), t(s_1) \bigr) \Bigr\|_0 ds_1 
\le c_u \sqrt{\Delta t} \| u \|_{Z^2(t^{n-1},t^n)}, 
\label{ieq:R11}
\\
\| R_{h12}^n \|_0 
& \le c_u \Delta t \int_0^1 \Bigl\| \prz{u}{t} \bigl( \cdot , t(s) \bigr) \Bigr\|_0 ds 
\le c_u \sqrt{\Delta t} \Bigl\| \prz{u}{t} \Bigr\|_{L^2(t^{n-1},t^n; L^2)},
\end{align}
\end{subequations}
where for the last inequality of~\eqref{ieq:R11} we have changed the variable from $x$ to $y$ and used the evaluation $\det (\pz y(x, s_1)/\pz x) \ge 1/2~(\forall s_1\in [0,1])$ from Lemma~\ref{lem:v-vX}-(i).
From~\eqref{ieqs:Rh_11_12} we get~\eqref{ieq:R1}.
\par
\eqref{ieq:R2} is obtained as
\begin{align}
\| R_{h2}^n \|_0 
& \le \alpha_{42} \|u_h^{n-1}-u^{n-1}\|_0 \|u^{n-1}\|_{1,\infty} 
\le \alpha_{42} \|u^{n-1}\|_{1,\infty} (\|\eta^{n-1}\|_0+\|e_h^{n-1}\|_0) \label{ieq:R2_proof}\\
& \le \alpha_{42} \|u^{n-1}\|_{1,\infty} ( \alpha_{31} h\|(u,p)^{n-1}\|_{H^2\times H^1} + \|e_h^{n-1}\|_0 ).\notag
\end{align}
\par
We prove~\eqref{ieq:R3}.
Let $y(x,s) \equiv x - (1-s) u_h^{n-1}(x) \Delta t$.
Since it holds that
\begin{align*}
R_{h3}^n 
& = \fz{1}{\Delta t} \Bigl[ \eta \bigl( y(\cdot,s), t(s) \bigr) \Bigr]_{s=0}^1 = \int_0^1 \Bigl\{ \Bigl( \prz{}{t} + u_h^{n-1}(\cdot)\cdot\nabla \Bigr) \eta \Bigr\} \bigl( y(\cdot,s), t(s) \bigr) ds,
\end{align*}
we have
\begin{align*}
\| R_{h3}^n \|_0 
& \le \int_0^1 \Bigl\| \Bigl\{ \Bigl( \prz{}{t} + u_h^{n-1}(\cdot)\cdot\nabla \Bigr) \eta \Bigr\} \bigl( y(\cdot, s), t(s) \bigr) \Bigr\|_0 ds \\
& \le \int_0^1 \Bigl( \Bigl\|\prz{\eta}{t} \bigl( y(\cdot, s), t(s) \bigr) \Bigr\|_0 + \|u_h^{n-1}\|_{0,\infty} \bigl\| \nabla \eta \bigl( y(\cdot, s), t(s) \bigr) \bigr\|_0 \Bigr) ds \\
& \le \sqrt{2} \int_0^1 \Bigl\{ \Bigl\|\prz{\eta}{t} \bigl( \cdot, t(s) \bigr) \Bigr\|_0 + \|u_h^{n-1}\|_{0,\infty} \bigl\| \nabla \eta \bigl( \cdot, t(s) \bigr) \bigr\|_0 \Bigr\} ds \quad \mbox{(by Lemma~\ref{lem:v-vX}-(i))}\\
& \le \sqrt{ \fz{2}{\Delta t} } \Bigl( \Bigl\| \prz{\eta}{t} \Bigr\|_{L^2(t^{n-1},t^n; L^2)} + \|u_h^{n-1}\|_{0,\infty} \bigl\| \nabla \eta \bigr\|_{L^2(t^{n-1},t^n; L^2)} \Bigr) \\
& \le \sqrt{ \fz{2}{\Delta t} } \alpha_{31}h (\|u_h^{n-1}\|_{0,\infty} + 1) \| (u,p) \|_{H^1(t^{n-1},t^n; H^2\times H^1)},
\end{align*}
which implies~\eqref{ieq:R3}.
\par
\eqref{ieq:R4} is obtained as
\begin{align*}
\phantom{MMMMMMMMMMMa}
\| R_{h4}^n \|_0 = \fz{1}{\Delta t} \bigl\| e_h^{n-1} - e_h^{n-1} \circ X_1(u_h^{n-1},\Delta t) \bigr\|_0 \le \alpha_{40} \|u_h^{n-1}\|_{0,\infty} \|e_h^{n-1}\|_1.
\phantom{MMMMMMMMMMa}
\end{align*}
\subsection{Proof of Lemma~\ref{lem:estimates_R_L2}}
\label{proof:lem_R_L2}
\eqref{ieq:R2_L2} is obtained by combining~\eqref{ieq:StokesPrj_L2} with~\eqref{ieq:R2_proof}.
For~\eqref{ieq:R3_L2} we divide $R_{h3}^n$ into three terms,
\begin{align*}
R_{h3}^n 
& = \ol{D}_{\Delta t} \eta^n + \fz{1}{\Delta t}\Bigl\{ \eta^{n-1} - \eta^{n-1} \circ X_1(u^{n-1},\Delta t) \Bigr\} + \fz{1}{\Delta t} \Bigl\{ \eta^{n-1}\circ X_1(u^{n-1},\Delta t) - \eta^{n-1} \circ X_1(u_h^{n-1},\Delta t) \Bigr\} \\
& \equiv R_{h31}^n + R_{h32}^n + R_{h33}^n.
\end{align*}
It holds that, in virtue of \eqref{ieq:StokesPrj_L2},
\begin{subequations}\label{ieq:R3i}
\begin{align}
\| R_{h31}^n \|_{V_h^\prime} & \le \| \ol{D}_{\Delta t}\eta^n\|_0
\le \fz{1}{\sqrt{\Delta t}} \Bigl\|\prz{\eta^n}{t}\Bigr\|_{L^2(t^{n-1},t^n; L^2)} 
\le \fz{\alpha_{32} h^2}{\sqrt{\Delta t}} \| (u,p) \|_{H^1(t^{n-1},t^n; H^2\times H^1)}, \label{ieq:R31}\\
\| R_{h32}^n \|_{V_h^\prime} & \le \alpha_{41} \|u^{n-1}\|_{1,\infty} \| \eta^{n-1} \|_0 \le \alpha_{41} \|u^{n-1}\|_{1,\infty}~\alpha_{32} h^2 \| (u,p)^{n-1} \|_{H^2\times H^1}, \label{ieq:R32}\\
\| R_{h33}^n \|_{V_h^\prime}
& = \sup_{v_h \in V_h} \fz{1}{\|v_h\|_1} \fz{1}{\Delta t} \Bigl( \eta^{n-1}\circ X_1(u^{n-1}_h,\Delta t) - \eta^{n-1} \circ X_1(u^{n-1},\Delta t), v_h \Bigr) \notag\\
& \le \sup_{v_h \in V_h} \fz{1}{\|v_h\|_1} \fz{1}{\Delta t} \bigl\| \eta^{n-1}\circ X_1(u^{n-1}_h,\Delta t) - \eta^{n-1} \circ X_1(u^{n-1},\Delta t) \bigr\|_{0,1} \|v_h\|_{0,\infty} \notag\\
& \le \alpha_{43} \bigl\| u^{n-1}_h - u^{n-1} \bigr\|_0 \| \eta^{n-1} \|_1 \alpha_{21} h^{-d/6} \label{ieq:R33_0}\\
& \le \alpha_{21} \alpha_{43} h^{-d/6} \| \eta^{n-1} \|_1 ( \|e_h^{n-1}\|_0 + \|\eta^{n-1}\|_0 ) \notag\\
& \le \alpha_{21} \alpha_{43} \alpha_{32} h^{1-d/6} \| (u,p)^{n-1} \|_{H^2\times H^1} \bigl( \| e_h^{n-1}\|_0 + \alpha_{32} h^2 \|(u,p)^{n-1}\|_{H^2\times H^1} \bigr) \notag\\
& \le c \| (u,p)^{n-1} \|_{H^2\times H^1} \bigl( \| e_h^{n-1}\|_0 + h^2 \|(u,p)^{n-1}\|_{H^2\times H^1} \bigr).\label{ieq:R33}
\end{align}
\end{subequations}
From~\eqref{ieq:R31}, \eqref{ieq:R32} and~\eqref{ieq:R33} we obtain~\eqref{ieq:R3_L2}.
\par
For~\eqref{ieq:R4_L2} we use the estimate of $R_{h3}^n$.
$R_{h4}^n$ is obtained by replacing $\eta^{n-1}$ with $-e_h^{n-1}$ in $R_{h32}^n + R_{h33}^n$.
Hence, from~\eqref{ieq:R32} and~\eqref{ieq:R33_0} we have
\begin{align*}
\| R_{h4}^n \|_{V_h^\prime}
& \le \alpha_{41} \|u^{n-1}\|_{1,\infty} \|e_h^{n-1}\|_0 + \alpha_{21} \alpha_{43} h^{-d/6} \| e_h^{n-1} \|_1 \bigl\| u^{n-1}_h - u^{n-1} \bigr\|_0 \\
& \le \alpha_{41} \|u^{n-1}\|_{1,\infty} \|e_h^{n-1}\|_0 + \alpha_{21} \alpha_{43} h^{-d/6} \| e_h^{n-1} \|_1 \bigl( \| e_h^{n-1}\|_0 + \alpha_{32}h^2 \|(u,p)^{n-1} \|_{H^2\times H^1} \bigr) \\
& \le c_u (1+h^{-d/6} \| e_h^{n-1} \|_1 ) \bigl( \| e_h^{n-1}\|_0 + h^2 \|(u,p)^{n-1} \|_{H^2\times H^1} \bigr),
\end{align*}
which implies~\eqref{ieq:R4_L2}.
%
%
%
%
%
%
%
%
%
%

%
\end{document}